\newtheorem{dummy}{dummy}[section]
\newtheorem{lemma}[dummy]{Lemma}
\newtheorem{theorem}[dummy]{Theorem}
\newtheorem{conjecture}[dummy]{Conjecture}
\newtheorem{proposition}[dummy]{Proposition}
\theoremstyle{definition}
\newtheorem{definition}[dummy]{Definition}
\newtheorem{remark}[dummy]{Remark}
\newcommand{\bC}{\mathbb{C}}
\newcommand{\bN}{\mathbb{N}}
\newcommand{\bP}{\mathbb{P}}
\newcommand{\bR}{\mathbb{R}}
\newcommand{\bZ}{\mathbb{Z}}
\newcommand{\bT}{\mathbb{T}}
\newcommand{\cE}{\mathcal{E}}
\newcommand{\cF}{\mathcal{F}}
\newcommand{\cG}{\mathcal{G}}
\newcommand{\cL}{\mathcal{L}}
\newcommand{\cO}{\mathcal{O}}
\newcommand{\cS}{\mathcal{S}}
\newcommand{\gk}{\kappa}
\newcommand{\Fuk}{\mathrm{Fuk}}
\newcommand{\Sh}{\mathit{Sh}}
\newcommand{\Perf}{\mathcal{P}\mathrm{erf}}
\newcommand{\cpm}{\mathrm{CPM}}
\newcommand{\Tbi}{T_{\beta_{i}}}
\newcommand{\Tbj}{T_{\beta_{j}}}
\newcommand{\Ta}{T_{\alpha}}
\newcommand{\Txi}{T_{\kappa(x_i)}}
\newcommand{\To}{T_{\mathcal{O}}}
\numberwithin{equation}{section}
\begin{document}

\title{HMS for punctured tori and categorical mapping class group actions}


\author{Nicol\`o Sibilla}
\address{Department of Mathematics, Northwestern University, 2033 Sheridan Road, Evanston, Il 60208}

\email{sibilla@math.northwestern.edu}
\thanks{It is a pleasure to thank David Treumann and Eric Zaslow for our collaborations, and many stimulating discussions.}

\subjclass[2010]{Primary 53D37, Secondary 53D02}

\begin{abstract}
Let $X_n$ be a cycle of $n$ projective lines, and $\bT_n$ a symplectic torus with $n$ punctures. In this paper we review results of \cite{STZ} and \cite{Si}, which establish a version of homological mirror symmetry relating $X_n$ and $\bT_n$, and define on $D^b(Coh(X_n))$ an action of the pure mapping class group of $\bT_n$. 
\end{abstract}

\maketitle


\bibliographystyle{amsplain}

\section{Introduction.}
\label{intro}

As originally formulated by Kontsevich \cite{K}, Homological Mirror Symmetry (from now on, HMS) relates the derived category of coherent sheaves on a smooth projective Calabi-Yau manifold $X$, $D^b(Coh(X))$, and the Fukaya category of a compact symplectic manifold $\tilde{X}$, by stating that if $X$ and $\tilde{X}$ are mirror partners, then $D^b(Coh(X)) \cong Fuk(\tilde{X})$. Since its proposal, much work has been done towards establishing Kontsevich's conjecture in important classes of examples, see \cite{PZ, S, Sh}, and references therein.

In \cite{STZ}, joint with Treumann and Zaslow, we address mirror symmetry in dimension $1$, by proving a version of HMS which pairs singular degenerations of elliptic curves, given by cycles of projective lines, and punctured symplectic tori. This result relies on the use of a conjectural model for the Fukaya category of a punctured Riemann surface $\Sigma$, which is constructed in terms of a sheaf of dg categories, $\cpm(-)$,\footnote{$\cpm$ stands for `constructible plumbing model.' The name depends on the fact that $\cpm$ can be defined in greater generality, and supplies a dg model for the Fukaya category of \emph{plumbings} of cotangent bundles in any dimension \cite{STZ2}.} defined over the Lagrangian skeleton of $\Sigma$.\footnote{HMS for punctured spheres has been investigated also in \cite{AAEKO}.} In \cite{Si}, using the theory of spherical objects and twist functors introduced by Seidel and Thomas in \cite{SeT}, we test one of the predictions of the mirror symmetry framework developed in \cite{STZ}, by showing that the (pure) mapping class group of a punctured torus acts by equivalences on the derived category of a cycle of projective lines. 

In this paper we will review these results, by focusing on motivations and examples, and keeping the presentation of the arguments as explicit and concrete as possible. Let $X_n$ be a cycle of projective lines, with $n$ components, and $\bT_n$ a symplectic torus with $n$ punctures. We will review the construction of $\cpm$ for $\bT_n$ in Section \ref{CPM}. The HMS statement relating the $\cpm$ model for $Fuk(\bT_n)$ and $\Perf(X_n)$ will be proved in Section \ref{hms}. We will conclude by giving, in Section \ref{mcg}, a brief overview of the results in \cite{Si}, focusing on the case of $D^b(Coh(X_2))$.

\section{A model for the Fukaya category of punctured Riemann surfaces}
\label{CPM}
Starting in 2009, in various talks, Kontsevich has argued \cite{K1} that the Fukaya category of a Stein manifold should have good local-to-global properties, and therefore conjecturally could be recovered as the global sections of a suitable sheaf of dg categories (note also \cite{S1}, and \cite{N1}). This is in keeping with previous work of Nadler and Zaslow who, in \cite{NZ} and \cite{N}, establish an equivalence between the Fukaya category of exact Lagrangians in a cotangent bundle $T^*X$, and the dg category of (complexes of cohomologically) constructible sheaves over $X$, $Sh(X)$. 

Following Kontsevich's insight, in \cite{STZ} we equip the Lagrangian skeleton of a punctured Riemann surface $\Sigma$ with a sheaf of dg categories,\footnote{$Sh(X)$ is the dg enhancement of the derived category of constructible shaves of $\bC$-vector spaces over $X$. From now on, we will refer to objects in $Sh(X)$ simply as `constructible sheaves.' See \cite{KS} for a comprehensive introduction to the subject.} called $\cpm(-)$, such that its local behavior is dictated by Nadler and Zaslow's work on cotangent bundles, while its global sections are conjecturally quasi-equivalent to the Fukaya category of exact Lagrangians in $\Sigma$, $Fuk(\Sigma)$. Before discussing the construction of $\cpm(-)$ in Section \ref{cpm}, we collect in Section \ref{microlocal} below the necessary background on sheaf theory.

\subsection{Microlocal sheaf theory in dimension $1$}
\label{microlocal}
In \cite{KS}, Kashiwara and Schapira explain how to attach to a constructible sheaf $\cF \in Sh(X)$ a conical (i.e. invariant under fiberwise dilation) Lagrangian subset of $T^*X$, called \emph{singular support}, and denoted $SS(\cF)$. Informally, $SS(\cF)$ is an invariant encoding the co-directions along which $\cF$ does not `propagate.' Rather than giving the exact definition, for which we refer the reader to Section 5.1 of \cite{KS}, we will describe in Lemma \ref{lambda} how the singular support works in the cases which will be relevant for us. If $\Lambda \hookrightarrow T^*X$ is a conical Lagrangian subset, denote $Sh(X, \Lambda) \hookrightarrow Sh(X)$ the full subcategory of constructible sheaves $\cF \in \Sh(X)$ such that $SS(\cF) \subset \Lambda$.  

\begin{proposition} 
\label{sheaf}
Let $X$ be a $1$-dimensional manifold, let $\Lambda \hookrightarrow T^*X$ be a conical Lagrangian subset, and denote $\pi:T^*X \rightarrow X$ the natural projection. The assigment sending a conical open subset $V \subset \Lambda$, to the dg category $Sh(\pi(V), V)$, 
can be extended to a sheaf of dg categories, denoted $MSh(-)$, over $\Lambda$ equipped with its natural topology.
\end{proposition}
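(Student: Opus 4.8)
The statement asserts that the assignment $V\mapsto Sh(\pi(V),V)$ on the poset of conical open subsets of $\Lambda$, once suitable restriction functors are fixed, satisfies homotopy descent; so the proof splits into (a) upgrading the assignment to an honest presheaf of dg categories, i.e.\ producing coherent restriction functors, and (b) checking the sheaf (stack) axiom. For (a), given conical opens $V'\subseteq V$ with $\pi(V')\subseteq\pi(V)$, ordinary restriction of sheaves carries $Sh(\pi(V),V)$ into $Sh(\pi(V'),V\cap\pi^{-1}(\pi(V')))$; since $V'$ is cut out of $V\cap\pi^{-1}(\pi(V'))$ by a further closed conic condition on the singular support --- which by Lemma \ref{lambda} is completely explicit in dimension one --- the fully faithful inclusion $Sh(\pi(V'),V')\hookrightarrow Sh(\pi(V'),V\cap\pi^{-1}(\pi(V')))$ admits adjoints, the microlocal cut-off functors of Kashiwara--Schapira, and composing restriction with one of these (say the left adjoint) gives the restriction functor of $MSh$. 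To make this functorial on the nose I would realize all the categories $Sh(\pi(V),V)$ simultaneously as localizations of one large cocomplete stable dg category of ind-constructible complexes on $X$: the restriction functors then become composites of the honest dg functors $j^{*}$ (restriction along open embeddings) with these localizations, so associativity holds at the cochain level and no $A_\infty$ bookkeeping intervenes.

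For (b), it suffices to check descent on a basis of the conical topology and for finite covers, so I would first enumerate the elementary conical opens of $\Lambda$ in dimension one. Over the regular part of $\Lambda$ --- an open subset of the zero section, disjoint from the finitely many points carrying a cotangent ray --- $Sh(\pi(V),V)$ is the dg category of local systems on $\pi(V)$, and descent there is the classical fact that $\Cmod$-valued local systems form a stack on a $1$-manifold. The only interesting local models are thus neighborhoods of the points where $\Lambda$ contains one or two rays, i.e.\ the vertices of the skeleton. Near such a vertex the basic opens containing it are the ``full'' neighborhoods $V_{\mathrm{full}}(I)=0_I\cup(\text{ray(s)})$ for shrinking intervals $I$, while the rest is covered by a ray-neighborhood and by the two zero-section intervals on either side of the vertex; in particular any cover of $V_{\mathrm{full}}(I)$ must contain some $V_{\mathrm{full}}(I')$ with $I'\Subset I$. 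The descent square for such a cover then unwinds into two checks: first, that restriction $Sh(\pi(V),V_{\mathrm{full}}(I))\to Sh(\pi(V'),V_{\mathrm{full}}(I'))$ is an equivalence whenever $I'\Subset I$ both contain the vertex --- this is propagation, namely the Kashiwara--Schapira non-characteristic deformation lemma, applicable because the boundary covectors $\pm dx$ of the shrinking interval avoid the rays of $\Lambda$, which sit at the vertex; and second, the elementary gluing over $V_{\mathrm{full}}(I)\setminus\{\text{vertex}\}$, where the open sets are disjoint unions of intervals and a ray and $Sh$ reduces to local systems. Verifying that the overlaps $V_{\mathrm{full}}(I')\cap(\text{ray})$ and $V_{\mathrm{full}}(I')\cap 0_{I\setminus\{\text{vertex}\}}$ compute as the expected local-systems categories then completes descent on the basis, and hence in general.

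The main obstacle is not any individual local computation but the interface between (a) and (b): the local equivalences above are transparent as statements about triangulated categories, yet to assemble them into an equivalence of sheaves of dg categories one must control all the higher coherences, which is precisely why the argument has to be run inside a fixed ambient stable dg category in which restriction and microlocal cut-off are manifestly functorial. Producing the microlocal cut-off functors --- and with them the restriction functors of $MSh$ --- strictly, and checking their compatibility with the \v{C}ech diagrams rather than merely up to homotopy, is the technical heart of the proof; everything else is either classical (gluing of local systems) or an immediate consequence of propagation.
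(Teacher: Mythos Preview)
The paper does not actually prove this proposition: its entire argument is a pointer to Section~3.1 of \cite{STZ} and to the microlocal machinery of \cite{KS}, with the remark that the analogous statement holds in all dimensions. So there is no detailed proof in the paper to compare against; your proposal is effectively an attempt to reconstruct what those references contain.

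As such a reconstruction, your outline is on the right track and matches the standard shape of the argument: produce restriction functors from microlocal cut-off, then verify descent on a basis of conical opens, invoking non-characteristic deformation (propagation) near the vertices and ordinary gluing of local systems on the smooth part of $\Lambda$. Your diagnosis that the real work lies in making the cut-off/restriction functors strictly coherent inside a single ambient cocomplete dg category is also accurate; this is exactly the kind of issue that \cite{STZ} handles and that the paper is outsourcing.

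One point worth tightening: the usual construction of $MSh$ in the Kashiwara--Schapira framework does not proceed by choosing an adjoint to the inclusion $Sh(\pi(V'),V')\hookrightarrow Sh(\pi(V'),V\cap\pi^{-1}(\pi(V')))$, but rather by \emph{localizing} --- defining sections over a conic open $U$ as the Verdier quotient of $Sh(X)$ by the full subcategory of sheaves whose singular support misses $U$. The restriction maps then come for free from the universal property of localization, which sidesteps the question of which adjoint to take and makes functoriality automatic. Your adjoint formulation is equivalent in the presentable setting you invoke, but the quotient description is both cleaner and closer to what the cited sources actually do.
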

\begin{proof}
The proof of the statement is discussed in Section 3.1 of \cite{STZ}, and depends on the microlocal theory of sheaves developed in \cite{KS}. In fact, a similar statement holds in all dimensions \cite{STZ2}.
\end{proof}

\begin{lemma}
\label{lambda}
Let $X$ be a $1$-dimensional manifold, and $P = \{p_1, p_2 \ldots, p_n\}$ a finite collection of points in $X$, then $\Lambda_P = X \cup (T^*_{p_1}X \cup \ldots \cup T^*_{p_n}X)$ is a conical Lagrangian, and $Sh(X, \Lambda_P) \subset Sh(X)$ coincides with the full subcategory of constructible sheaves which are locally constant on $P$, and on $X \setminus P$. 
\end{lemma}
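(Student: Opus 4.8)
The statement has two parts: first that $\Lambda_P = X \cup (T^*_{p_1}X \cup \ldots \cup T^*_{p_n}X)$ is a conical Lagrangian, and second that $Sh(X,\Lambda_P)$ is exactly the subcategory of sheaves that are locally constant away from $P$ and at each point of $P$. The first part I would handle quickly: the zero section $X \subset T^*X$ is Lagrangian (it is the conormal to $X$ itself), each cotangent fiber $T^*_{p_i}X$ is the conormal variety to the point $p_i$, hence Lagrangian, and each is visibly invariant under the fiberwise scaling action of $\bR_{>0}$; the union is therefore an isotropic, half-dimensional, conical subset, which in dimension $1$ is all that ``conical Lagrangian'' requires (every such closed conical subset in $T^*X$ with $\dim X = 1$ is automatically a union of pieces of the zero section and cotangent fibers). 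The real content is the second part, which is where I would spend the bulk of the argument.

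For the second part, the plan is to recall the description of $SS(\cF)$ for constructible sheaves on a $1$-manifold and match it to the stratification $\{X\setminus P,\ \{p_1\},\ldots,\{p_n\}\}$. Any $\cF \in Sh(X)$ is constructible with respect to some finite stratification of $X$ refining a discrete set of points; concretely $\cF$ is determined by its (co)stalks and the generization maps across each special point. The key computational input is the standard local calculation: for the skyscraper $\cF = i_{p*}V$ at a point $p$, $SS(\cF) = T^*_pX$ (plus the zero section over $p$, which is absorbed), while for a locally constant sheaf on an open interval $SS(\cF)$ is just the zero section over that interval; and $SS$ is additive over triangles and local on $X$. Writing a general $\cF$ via the triangles relating $j_! j^* \cF$, $\cF$, and $i_* i^* \cF$ for $j: X\setminus P \hookrightarrow X$ and $i: P \hookrightarrow X$, one sees that $SS(\cF) \subset \Lambda_P$ forces $j^*\cF$ to be locally constant on $X \setminus P$ (no covectors allowed over interior points of $X \setminus P$) and forces the behavior at each $p_i$ to contribute at worst the full fiber $T^*_{p_i}X$ — which is precisely the condition that $\cF$ restricted to each connected stratum is locally constant, i.e. $\cF$ is locally constant on $P$ and on $X\setminus P$. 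Conversely, if $\cF$ is locally constant on the strata, the same local computation shows $SS(\cF)$ can only pick up zero-section directions over $X \setminus P$ and fiber directions over the $p_i$, so $SS(\cF) \subset \Lambda_P$.

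The main obstacle, I expect, is being careful about the ``only if'' direction at the points $p_i$: one must argue that $SS(\cF) \subset \Lambda_P$ genuinely implies local constancy at $p_i$ — that is, rule out a sheaf which is constructible for a strictly finer stratification near $p_i$ (impossible in dimension $1$, since the only strata are points and intervals, but one should say why) — and, more subtly, that there is no constraint coming from $\Lambda_P$ linking the stalk at $p_i$ to the nearby generic stalk beyond what ``locally constant on $\{p_i\}$'' allows. Concretely, the subtlety is that for a sheaf with a nonzero vanishing cycle at $p_i$ the singular support picks up exactly $T^*_{p_i}X$ and nothing more, so every such sheaf does lie in $Sh(X,\Lambda_P)$; the cleanest way to see this is to reduce, via the locality of $SS$ and the triangles above, to the two model cases $\cF = j_! \underline{\bC}_{(p_i, p_i + \epsilon)}$ and $\cF = j_* \underline{\bC}_{(p_i, p_i+\epsilon)}$ on a small interval, for which $SS$ is computed directly from the definition in \cite[\S 5.1]{KS} and equals the zero section over $(p_i, p_i+\epsilon)$ together with the inward, resp. outward, pointing covector ray at $p_i$, hence is contained in $\Lambda_P$. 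Assembling these local contributions along $X$ and invoking additivity of $SS$ over the gluing triangles then yields the claimed equality of subcategories.
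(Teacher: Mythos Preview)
The paper does not actually supply a proof of this lemma: it is stated as a description of how singular support behaves in the one-dimensional situations needed later, with the general theory of \cite{KS} serving as implicit justification, and the text moves immediately on to the quiver description. Your argument is correct and fills in exactly what the paper omits. The essential input is the standard fact that $SS(\cF)$ is contained in the zero section over an open set if and only if $\cF$ is locally constant there; granting that, the backward direction is immediate (over $X\setminus P$ the hypothesis $SS(\cF)\subset\Lambda_P$ forces $SS(\cF)$ into the zero section, hence local constancy, while over the discrete set $P$ local constancy is vacuous), and the forward direction follows from the triangle inequality for $SS$ together with the local computations you indicate. Your treatment of the model cases $j_!\underline{\bC}$ and $j_*\underline{\bC}$ is more detail than strictly necessary, but it makes the argument self-contained rather than a bare citation of \cite{KS}. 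One small wording point: $SS$ is not literally ``additive over triangles''; the correct statement is the containment $SS(\cF)\subset SS(\cF')\cup SS(\cF'')$ for a distinguished triangle $\cF'\to\cF\to\cF''\to$, which is what you actually use.
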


$Sh(X, \Lambda_P)$ admits a very simple combinatorial description in terms of quiver representations. Call $\cS$ the partition of $X$ given by the points in $P$, and by the connected components of $X \setminus P$. Denote $Q_\cS$ the quiver whose vertices are the elements of $\cS$, and with an arrow joining $S, S' \in \mathcal{S}$, with that orientation, if and only if $S$ is a point, and $S'$ is a sub-interval such that $S \in \overline{S'}$. For example, if $P$ has cardinality $1$, then $Q_\cS$ is equal to $\bullet \leftarrow \bullet \rightarrow \bullet$ if $X = \bR$, and to $\bullet \rightrightarrows \bullet$ if $X = S^1$.

\begin{lemma}
\label{quiver}
$Sh(X, \Lambda_P) \cong Rep(Q_\cS)$.\footnote{$Rep(Q_\cS)$ denotes the dg derived category of representations. In the proof below and everywhere in the paper, all functors, such as the stalk functor, are implictly assumed to be derived.}
\end{lemma}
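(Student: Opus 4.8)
The plan is to write down an explicit pair of mutually quasi-inverse functors and to check the equivalence first on the underlying abelian categories, then upgrade it to the stated quasi-equivalence of dg categories. By Lemma \ref{lambda}, an object of the abelian category underlying $Sh(X,\Lambda_P)$ is simply a sheaf of $\bC$-vector spaces which is locally constant on each $p_i$ and on each connected component of $X\setminus P$, i.e.\ a sheaf constructible for the stratification $\cS$. First I would define $F\colon Sh(X,\Lambda_P)\to Rep(Q_\cS)$ by sending $\cF$ to the representation whose value at the vertex $S$ is the (derived) stalk $\cF_{x_S}$ at a chosen point $x_S\in S$, and whose structure map along an arrow $S=\{p\}\to S'$ (with $S'$ an interval such that $p\in\overline{S'}$) is the generization map $\cF_p\to\cF_{x_{S'}}$ obtained by restricting sections from a small star of $p$ to its intersection with $S'$ — note that $\cF_p=\cF(U)$ for $U$ a sufficiently small connected neighbourhood of $p$, and $\cF(U\cap S')=\cF_{x_{S'}}$ since $\cF$ is locally constant on the interval $U\cap S'$. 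One checks that $F$ is a well-defined dg functor, exact for the standard $t$-structures, and that on a genuine (non-derived) sheaf the stalks are concentrated in degree $0$, so that $F$ restricts to a functor of abelian categories.

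Next I would construct a quasi-inverse $G$ by gluing local models. Given a representation $(V_S,\varphi_{S\to S'})$, put on each interval $S'$ the constant sheaf with stalk $V_{S'}$; on a chart $\bR\cong U\ni p$ with adjacent intervals $I_1,I_2$, the restriction of the representation to the sub-quiver $\bullet\leftarrow\bullet\rightarrow\bullet$ determines, by a direct and elementary computation, a unique $\cS|_U$-constructible sheaf $\cG_U$ with the prescribed stalks and generization maps — concretely $\cG_U(U)=V_p$, $\cG_U(U\cap I_j)=V_{I_j}$, $\cG_U(U\setminus\{p\})=V_{I_1}\oplus V_{I_2}$, with restriction maps read off from $\varphi$, and one verifies the sheaf axiom on the obvious finite cover of $U$. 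These $\cG_U$ and the constant sheaves on the intervals agree on overlaps (both restrict to the constant sheaf $V_{S'}$ there), so they glue to a sheaf $G(V)\in Sh(X)$, which by Lemma \ref{lambda} lies in $Sh(X,\Lambda_P)$. Checking that the unit and counit of $(F,G)$ are isomorphisms is again a local matter — it reduces to the star of each point and each interval — and yields that $F$ and $G$ are mutually quasi-inverse equivalences of abelian categories.

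Finally I would promote this to the asserted quasi-equivalence of dg categories. By general facts about constructible sheaves, $Sh(X,\Lambda_P)$ — the full dg subcategory of $Sh(X)$ on complexes with $\Lambda_P$-constructible cohomology — is a dg enhancement of the bounded derived category of the abelian category appearing in Lemma \ref{lambda}, and $Rep(Q_\cS)$ is, by definition, a dg enhancement of the bounded derived category of quiver representations (where moreover $\bC Q_\cS$ is hereditary, since $Q_\cS$ is finite and acyclic, so the two sides are especially transparent). Since $F$ is a dg functor inducing an equivalence on the hearts of bounded $t$-structures, it induces an equivalence on bounded derived categories, hence the desired quasi-equivalence $Sh(X,\Lambda_P)\cong Rep(Q_\cS)$. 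The step I expect to demand the most care is the reconstruction functor $G$: verifying the sheaf axiom and the singular-support condition for the glued object, and arranging $G$ to be functorial at the chain level compatibly with $F$, so that one genuinely obtains an equivalence of dg categories rather than merely a correspondence of objects. In dimension $1$ all of this is elementary — it is the point at which one must compute rather than cite a theorem — whereas the analogous statement in higher dimensions genuinely requires the microlocal machinery of \cite{KS}.
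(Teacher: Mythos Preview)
Your proposal is correct and follows essentially the same route as the paper: the equivalence is the stalk/generization (exit-path) functor $\cF\mapsto(\cF_{x_S},\,\cF_{p}\to\cF_{x_{S'}})$, exactly as you write. The paper's own proof is much terser---it simply names MacPherson's exit-path framework and describes this one functor---whereas you additionally build the gluing inverse $G$ and spell out the $t$-structure/hereditary argument for the dg upgrade; this extra work is fine and makes explicit what the paper leaves to the cited framework.
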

\begin{proof}
This statement is part of a larger framework, due to MacPherson, which describes sheaves that are constructible with respect to a given stratification in terms of \emph{exit paths}. If $\cF \in Sh(X, \Lambda)$, by taking its stalks over $p_1, \ldots p_n$, and over points lying on the different components of $X \setminus P$, we obtain a complex of vector spaces for each vertex of $Q_\cS$. Further, the restrictions maps of $\cF$ yield linear maps corresponding to the arrows of $Q_\cS$. This prescription maps $\cF$ to a representation of $Q_\cS$ in a functorial way, and defines the equivalence.
\end{proof}

\begin{remark}
\label{wheels}
Under the assumptions of Lemma \ref{quiver}, the sheaf $MSh(-)$ can be described explicitly. Assume, for concreteness, that $X = S^1$, and $P = \{p \}$. Also, fix an orientation on $T^*S^1$, and note that this allows us to decompose $T^*_pS^1$ as the union of $0$, and two rays, $R^+$ and $R^-$, respectively up-ward and down-ward pointing. Below, we describe the sections of $MSh(-)$ on \emph{contractible} open subsets $U \subset \Lambda$, and the assignment defining, on the objects, the restriction functors 
$$
Res_U: MSh(\Lambda) = Sh(X, \Lambda) \cong Rep(\bullet \rightrightarrows \bullet) \rightarrow MSh(U),
$$ 
the definition on morphisms will be obvious. This is enough to reconstruct $MSh(-)$.
Let $M=\xymatrix{V_1 \ar@<0.5ex>[r]^{f} \ar@<-0.5ex>[r]_{g}& V_2}$ be an object in $Rep(\bullet \rightrightarrows \bullet)$, then 
\begin{itemize}
\item if $U \subset S^1$, $MSh(U) \cong \bC-mod$, and $Res_U(M) = V_2$,
\item if $U \subset R^+$, $MSh(U) \cong \bC-mod$, and $Res_U(M) = Cone(f)$,
\item if $U \subset R^-$, $Msh(U) \cong \bC-mod$, and $Res_U(M) = Cone(g)$,
\item if $p \in U$, $MSh(U) \cong Rep(\bullet \leftarrow \bullet \rightarrow \bullet)$, and $Res_U(M)= V_2 \stackrel{f}{\leftarrow} V_1 \stackrel{g}{\rightarrow} V_2$.
\end{itemize}
\end{remark}
 
\subsection{The construction of $\cpm$ for $\bT_n$}
\label{cpm} 
A \emph{ribbon graph} is a graph equipped with a cyclic ordering on the set of half-edges incident to each vertex. Recall that ribbon graphs label cells in the moduli space of punctured Riemann surfaces (see e.g. \cite{P}). Further, if the Riemann surface $\Sigma$ lies in the cell labelled by $\Gamma_{\Sigma}$, there is an embedding $\Gamma_{\Sigma} \hookrightarrow \Sigma$, and a nicely behaved retraction of $\Sigma$ onto $\Gamma_{\Sigma}$. In the language of Stein geometry, $\Gamma_{\Sigma}$ is the \emph{skeleton} of $\Sigma$. 

Bracketing issues of valency, given a pair formed by a punctured Riemann surface and its skeleton, $\Gamma_{\Sigma} \hookrightarrow \Sigma$, we can consider an open covering $\{ U_i \}_{i \in I}$ of $\Gamma_\Sigma$ with the property that, for all $i \in I$, there is a symplectomorphism $\phi_i: U_i \rightarrow T^*M_i$, where $M_i$ is a $1$-dimensional manifold, and $\Lambda_i := \phi_i(U_i \cap \Gamma_\Sigma) \hookrightarrow T^*M_i$ is a conical Lagrangian subset. In conformity with Kontsevich's ansatz, we should be able to recover $\Fuk(\Sigma)$ by first applying Nadler and Zaslow's theory \cite{NZ, N} to compute the Fukaya category of each member of the covering family (this yields $Sh(M_i, \Lambda_i)$), and then exploiting the fact that the Fukaya category behaves like a sheaf over $\Gamma_{\Sigma}$. 

Roughly speaking, the theory developed in \cite{STZ} formalizes this heuristics by constructing a sheaf of dg categories $\cpm(-)$ over $\Gamma_\Sigma$, which is characterized by the property that, when restricted to $U_i \cap  \Gamma_\Sigma \cong \Lambda_i$, it coincides with the sheaf $MSh(-)$ over $\Lambda_i$ introduced in Proposition \ref{sheaf}. Since $\{U_i\}_{i \in I}$ covers $\Gamma_\Sigma$, this prescription is actually sufficient to compute sections and restriction functors for $\cpm(-)$ on arbitrary open subsets $U \subset \Gamma_\Sigma$, and therefore determines $\cpm(-)$ uniquely.\footnote{This informal account disregards various technical aspects of the theory, for which see \cite{STZ}.} 

\begin{conjecture}
\label{conj}
$\cpm(\Gamma_\Sigma)$ is quasi-equivalent to the Fukaya category of compact exact Lagrangians in $\Sigma$, $Fuk(\Sigma)$
\end{conjecture}

For the purposes of HMS, it is important to understand how this works for a symplectic torus with $n$ punctures, $\bT_n$. Let $\Lambda_i$, $i \in I = \{1, \ldots, n\}$, be a collection of $n$ copies of the conical Lagrangian $\Lambda_p := S^1 \cup T^*_pS^1 \hookrightarrow T^*S^1$. Note that, if we fix an orientation on $T^*S^1$, $\Lambda_p$ acquires a natural structure of ribbon graph. For each $i \in I$ there are open embeddings $j^+_i: \bR_{>0} \cong R^+_i \hookrightarrow \Lambda_i$, and  $j^-_i: \bR_{>0} \cong R^-_i \hookrightarrow \Lambda_i$, where $R^+_i$ and $R^-_i$ are defined as in Remark \ref{wheels}. Denote $\Gamma_n$ the ribbon graph constructed as the push-out of the diagram
$$
\xymatrix{ 
& \bR_{>0} \ar[ld]_{j^+_1} \ar[rd]^{j^-_2} & & \bR_{>0} \ar[ld]_{j^+_2} \ar[rd]^{j^-_3} & &\cdots \ar[dl] \ar[rd] & _{j^-_1} & \bR_{>0} \ar[dl]^{j^+_n} \ar[dlllllll]  \\
\Lambda_1 & & \Lambda_2 & & \Lambda_3 &  & \Lambda_n 
}
$$ 

Provided that $\bT_n$ is equipped with an appropriate complex structure, $\Gamma_n$ is isomorphic to the skeleton of $\bT_n$.\footnote{Note that, although a different choice of complex structure on $\bT_n$ could alter the geometry of the skeleton, this would not affect, up to quasi-equivalence, the global sections of $\cpm(-)$.} Also, the $\Lambda_i$-s supply an open covering for $\Gamma_n$. As $\cpm(-)$ defines a sheaf over $\Gamma_n$, its global sections $\cpm(\Gamma_n)$ can be calculated in the usual way, by taking the (\emph{homotopy}) equalizer of the \v{C}ech diagram (\ref{eq cpm}) below, and thus, informally, picking out local sections on the $\Lambda_i$-s which agree on the overlaps,
\begin{equation}
\label{eq cpm}
\xymatrix{
\  \underset{i \in I}{\prod}(\cpm(\Lambda_i) = Sh(S^1, \Lambda_p)) \ar@<1.5ex>[r]^{Res_+} \ar@<0.0ex>[r]_{Res_-}& \underset{j \in I}{\prod}(\cpm(\bR_{>0}) \cong \bC-mod).}
\end{equation}
Note that the functors $Res_+$ and $Res_-$ can be explicitly computed, since they are products of restriction functors for $Sh(S^1, \Lambda_p) \cong Rep(\bullet \rightrightarrows \bullet)$ which were described in Remark \ref{wheels}. We will conclude this section by giving a concrete recipe for constructing homotopy equalizers in $dgCat$.
\begin{lemma}
\label{homotopy eq}
Let $\xymatrix{\ \mathcal{C} \ar@<1ex>[r]^{F} \ar@<-0.5ex>[r]_{G}& \mathcal{D}}$ be a diagram in $dgCat$, and denote $\cE$ the dg category having,
\begin{itemize}
\item as objects, pairs $(C, u)$, where $C \in \mathcal{C}$, and $u$ is a degree zero, closed morphism 
$u:F(C) \cong G(C)$, which becomes invertible in the homotopy category,

\item as morphisms, pairs $(f, H) \in hom^k(C, C')\oplus hom^{k-1}(F(C), G(C'))$, with differential given by $d(f, H) = (df, dH - (u'F(f) - G(f) u))$. The composition is obvious.
\end{itemize}
Then $\cE$, endowed with the natural forgetful functor $\cE \rightarrow \mathcal{C}$, is a homotopy equalizer for $F$ and $G$.
\end{lemma}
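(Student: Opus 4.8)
The plan is to verify directly that the dg category $\cE$ described in the statement, together with the forgetful functor $p:\cE\to\mathcal{C}$, satisfies the universal property of a homotopy equalizer in $\dgCat$ — i.e. that $\cE$ models the homotopy fiber product $\mathcal{C}\times^h_{\mathcal{D}\times\mathcal{D}}\mathcal{D}$ where $\mathcal{D}$ sits in $\mathcal{D}\times\mathcal{D}$ diagonally. Concretely, the homotopy equalizer of $F,G$ is the same as the homotopy pullback of $(F,G):\mathcal{C}\to\mathcal{D}\times\mathcal{D}$ along the diagonal $\Delta:\mathcal{D}\to\mathcal{D}\times\mathcal{D}$, and there is a standard explicit cofibrant-replacement model for homotopy pullbacks along the diagonal: replace one leg by the "path category" $\mathcal{D}^{I}$ whose objects are pairs of objects of $\mathcal{D}$ together with a morphism between them that is invertible in the homotopy category (a dg analogue of the path space $\mathrm{Map}(I,\mathcal{D})$), then take the strict fiber product. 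Unwinding that strict fiber product reproduces exactly the objects $(C,u)$ and the mapping complexes $\dghom^k(C,C')\oplus \dghom^{k-1}(F(C),G(C'))$ with the stated twisted differential $d(f,H)=(df, dH-(u'F(f)-G(f)u))$. So the first step is to recall/state this path-object model for $\mathcal{D}^{I}$ and check it is a path object: the factorization $\mathcal{D}\xrightarrow{\sim}\mathcal{D}^{I}\twoheadrightarrow\mathcal{D}\times\mathcal{D}$ through a quasi-equivalence followed by a fibration in the Tabuada model structure on $\dgCat$.

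Second, I would check the sign/differential bookkeeping: that $d^2=0$ on the mapping complexes of $\cE$, that composition is a chain map, and that $\cE$ is genuinely a dg category. This is the "obvious" routine verification alluded to in the statement — one writes $d(f,H)=(df,dH\mp(u'F(f)-(-1)^{?}G(f)u))$ with whatever sign convention makes $d^2=0$, using that $u,u'$ are closed of degree zero and $F,G$ are dg functors (hence commute with $d$). I would not grind through all the signs in the writeup, just indicate that they are forced and that closedness of $u$ is what makes the cross-term square to zero.

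Third — and this is the only conceptually substantive point — I would show that the forgetful functor $\cE\to\mathcal{C}$ is a fibration and that the induced comparison from $\cE$ to the actual homotopy pullback is a quasi-equivalence, so that $\cE$ may be used in its place. Equivalently: form the strict pullback $P=\mathcal{C}\times_{\mathcal{D}\times\mathcal{D}}\mathcal{D}^{I}$; since $\mathcal{D}^{I}\to\mathcal{D}\times\mathcal{D}$ is a fibration, $P$ is a homotopy pullback and $P\to\mathcal{C}$ is a fibration; then identify $P$ with $\cE$ on the nose by comparing objects and Hom-complexes. The requirement in the definition of $\cE$ that $u$ be \emph{closed of degree zero} and \emph{invertible in the homotopy category} is precisely the condition cutting out the objects of $P$, so this identification is immediate once the path-object model is in hand.

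The main obstacle I expect is purely organizational rather than deep: pinning down the correct path-object/cofibrant model in $\dgCat$ and matching its Hom-complexes — with the twisted differential and the correct Koszul signs — to the formula given in the statement, while being careful that "homotopy equalizer" is being taken in the homotopy-theoretic (Tabuada model category) sense and not merely as a strict limit. Once the path object is correctly set up, everything else is a direct unwinding. I would therefore organize the proof as: (1) recast the homotopy equalizer as a homotopy pullback along the diagonal; (2) introduce the explicit path object $\mathcal{D}^{I}$ and verify it is one; (3) take the strict pullback and observe it is a homotopy pullback because one leg is a fibration; (4) identify that strict pullback with $\cE$ and the projection with the forgetful functor.
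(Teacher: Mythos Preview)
Your proposal is correct and follows essentially the same route as the paper: the paper's proof is a one-paragraph sketch that invokes Tabuada's explicit path object $P(\mathcal{D})$ (Lemma~4.1 of \cite{Tab1}) and says the homotopy equalizer is then computed ``in the usual way, by taking appropriate fibrant replacements,'' with details left to the reader. Your steps (1)--(4) are precisely those details; the only slip is terminological --- factoring the diagonal through $\mathcal{D}^{I}\twoheadrightarrow\mathcal{D}\times\mathcal{D}$ is a \emph{fibrant} replacement, not a cofibrant one --- but you describe the factorization correctly a few lines later.
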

\begin{proof}
Lemma \ref{homotopy eq} depends on the availabilty of an explicit construction of the \emph{path object} $P(\mathcal{D})$ for $\mathcal{D}$, which can be found in Lemma 4.1 of \cite{Tab1}. This allows us to compute the homotopy equalizer in the usual way, by taking appropriate fibrant replacements. We leave the details to the reader.
\end{proof}

\section{$\cpm$ and mirror symmetry for degenerate elliptic curves}
\label{hms}
Let $X_n$ be a cycle of $n$ projective lines. That is, $X_n$ is a connected reduced curve with $n$ nodal singularities, such that its normalization $\tilde{X_n} \stackrel{p}{\rightarrow} X$ is a disjoint union of $n$ projective lines $D_1, \dots, D_n$, with the property that the pre-image along $\pi$ of the singular set interesects each $D_i$ in exactly two points. Theorem \ref{normalization} below yields a description of $\Perf(X_n)$ as a suitable homotopy equalizer of dg categories. This is one of the key steps in the proof of HMS for $X_n$, which will be presented in Section~\ref{sec hms}.  
\subsection{Perfect complexes over a nodal curve}
It will be covenient to make use of the following general result, according to which $\Perf(-)$ defines a sheaf of dg categories for the Zariski topology.
\begin{theorem}[see \cite{Toen}, Proposition 11]
\label{gluing}
Let $X = U \cup V$, where U and V are two Zariski open subschemes. 
Then the following square:
$$
\xymatrix{
\ \Perf(X)    \ar[r] \ar[d]      & \Perf(U)  \ar[d] \\
\ \Perf(V) \ar[r]                & \Perf(U \cap V)
}
$$
is a fiber product of dg categories.
\end{theorem}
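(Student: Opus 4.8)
The plan is to deduce the statement from Zariski descent for the dg category of quasi-coherent complexes together with the fact that perfectness is a local property. Write $\mathbf{L}_{qc}(-)$ for the dg enhancement of the unbounded derived category of quasi-coherent sheaves, so that $\Perf(-) \hookrightarrow \mathbf{L}_{qc}(-)$ is the full dg subcategory of perfect objects, and let $j_U, j_V, j_{UV}$ denote the open immersions of $U$, $V$, $U\cap V$ into $X$. The first and main step is to show that the restriction functor
$$
\mathbf{L}_{qc}(X) \longrightarrow \mathbf{L}_{qc}(U) \times_{\mathbf{L}_{qc}(U\cap V)} \mathbf{L}_{qc}(V)
$$
is a quasi-equivalence, where the right-hand side is the homotopy fiber product of dg categories — computed, if one wants a concrete model, by Lemma \ref{homotopy eq} applied to the cospan of restriction functors $j_U^\ast,\ j_V^\ast$ into $\mathbf{L}_{qc}(U\cap V)$. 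The remaining step is then purely formal.

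For full faithfulness of this functor one uses the Mayer--Vietoris triangle: for $E, F \in \mathbf{L}_{qc}(X)$ there is a distinguished triangle
$$
R\Hom_X(E,F) \to R\Hom_U(E|_U, F|_U) \oplus R\Hom_V(E|_V, F|_V) \to R\Hom_{U\cap V}(E|_{U\cap V}, F|_{U\cap V}) \xrightarrow{+1},
$$
obtained by applying the Mayer--Vietoris triangle for $R\Gamma(X,-)$, $R\Gamma(U,-)\oplus R\Gamma(V,-)$, $R\Gamma(U\cap V,-)$ to the complex $R\mathcal{H}om_X(E,F)$ and using that internal $\mathit{hom}$ commutes with restriction along open immersions. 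This triangle exhibits the mapping complex on the left as the homotopy fiber product of the mapping complexes on the right, which is exactly the mapping complex in the fiber-product dg category. For essential surjectivity one invokes the fact that quasi-coherent complexes satisfy Zariski (indeed flat) descent, i.e. $\mathbf{L}_{qc}(-)$ is a \emph{stack} of dg categories: a pair $(E_U, E_V)$ together with a gluing quasi-isomorphism $E_U|_{U\cap V} \simeq E_V|_{U\cap V}$ assembles, via the two-term \v{C}ech resolution $\mathcal{F}\to j_{U\ast}j_U^\ast\mathcal{F}\oplus j_{V\ast}j_V^\ast\mathcal{F}\to j_{UV\ast}j_{UV}^\ast\mathcal{F}$, into an object of $\mathbf{L}_{qc}(X)$ restricting to it; here one uses that for a two-element cover the \v{C}ech totalization collapses to precisely the cospan above, so that ``descent datum'' and ``object of the fiber product'' are the same thing.

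The second step is formal once the first is in place. An object $E \in \mathbf{L}_{qc}(X)$ is perfect if and only if $E|_U$ and $E|_V$ are perfect, since perfectness may be checked locally on a Zariski cover. Hence under the equivalence of the first step, $\Perf(X)$ is carried onto the full dg subcategory of $\mathbf{L}_{qc}(U) \times_{\mathbf{L}_{qc}(U\cap V)} \mathbf{L}_{qc}(V)$ on those pairs $(E_U, E_V)$ with $E_U$ and $E_V$ perfect (the compatibility object on $U\cap V$ is then automatically perfect). Since the homotopy fiber product of full dg subcategories is computed as the full subcategory of the ambient homotopy fiber product spanned by the corresponding objects, this full subcategory is $\Perf(U) \times_{\Perf(U\cap V)} \Perf(V)$. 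Combining the two steps yields the square in the statement.

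I expect the essential-surjectivity half of the first step to be the real obstacle: gluing unbounded complexes along a homotopy-coherent descent datum is exactly the content that upgrades $\mathbf{L}_{qc}(-)$ from a presheaf of dg categories to a stack, and a careful treatment either quotes this or runs the \v{C}ech-resolution argument with attention to homotopy coherence — the truncation of the \v{C}ech nerve for a two-element cover is what keeps it manageable. The other ingredients — the Mayer--Vietoris triangle, the locality of perfectness, and the behaviour of full subcategories under homotopy fiber products — are standard, and the passage between the various explicit models of homotopy limits of dg categories (e.g. the one of Lemma \ref{homotopy eq}) is routine.
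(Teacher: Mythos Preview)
Your sketch is correct and follows the standard route: establish Zariski descent for $\mathbf{L}_{qc}(-)$ via Mayer--Vietoris for mapping complexes and \v{C}ech gluing for objects, then restrict to perfect complexes using that perfectness is Zariski-local. This is essentially the argument in To\"en's notes that the paper cites.

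The paper itself does not supply a proof of this statement at all: it is stated as a black box with a reference to \cite{Toen}, Proposition 11, and is used as input to the proof of Theorem~\ref{normalization}. So there is nothing to compare at the level of strategy; you have simply unpacked what the cited reference does, and done so accurately. The one remark worth making is that your identification of the essential-surjectivity step as the genuine content is exactly right, and in To\"en's treatment this is handled by the general machinery of descent for presentable dg categories rather than by an explicit two-term \v{C}ech construction --- but for a two-element Zariski cover the two formulations are interchangeable, as you note.
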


\begin{lemma}
\label{bundles}
Let $X$ be a nodal curve, with normalization $\tilde{X} \stackrel{\pi}{\rightarrow} X$, then for every  $\mathcal{\tilde{F}}$ vector bundle on $\tilde{X}$ and isomorphism $u: \sigma^*(\mathcal{\tilde{F}}) \rightarrow \tau^*(\mathcal{\tilde{F}})$, the assignment:
$$
U \subset^{open} X \mapsto \{ s \in \mathcal{\tilde{F}}( \pi^{-1}(U)) | u(\sigma^*(s)) = \tau^*(s) \},
$$
defines a vector bundle $\tilde{\cF}^u$ on $X$ such that $\pi^*(\cF^u) \cong \mathcal{\tilde{F}}$. Conversely, if $\cF$ is a vector bundle on $X$ such that $\pi^*\cF \cong \tilde{\cF}$, then $\cF \cong \tilde{\cF}^u$ for some isomorphism $u: \sigma^*(\mathcal{\tilde{F}}) \rightarrow \tau^*(\mathcal{\tilde{F}})$.
\end{lemma}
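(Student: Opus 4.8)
The plan is to recognize the statement as \emph{Milnor patching} for the conductor square of the normalization, and to reduce everything to an explicit computation at a single node. Let $Z \subset X$ be the reduced singular locus, $j : Z \hookrightarrow X$ the inclusion, and $\tilde{Z} = \pi^{-1}(Z)_{\mathrm{red}} \subset \tilde{X}$; since each node has two distinct preimages, choosing at each node an ordering of the two branches produces two sections $\sigma, \tau : Z \to \tilde{X}$ of $\pi$, with $\pi \circ \sigma = \pi \circ \tau = j$. The square with corners $\tilde{Z}, \tilde{X}, Z, X$ is cartesian, and in an affine neighborhood of a node one has $X = \Spec \bC[x,y]/(xy)$ and $\tilde{X} = \Spec\bC[x] \sqcup \Spec\bC[y]$, realizing $\cO_X$ as the fiber product $\bC[x,y]/(xy) = \bC[x] \times_{\bC} \bC[y]$ (functions on the two branches agreeing at the origin). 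I would prove the two assertions of the lemma separately, in each case checking the claim over $X \setminus Z$, where $\pi$ is an isomorphism and there is nothing to do, and then locally at a node.

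For the first assertion: by construction $\tilde{\cF}^u$ is the equalizer of the two maps $\pi_* \tilde{\cF} \rightrightarrows j_* \tau^* \tilde{\cF}$ given by $s \mapsto \tau^*(s)$ and $s \mapsto u(\sigma^*(s))$, hence a coherent $\cO_X$-module; it remains to see it is locally free with $\pi^* \tilde{\cF}^u \cong \tilde{\cF}$. Localizing at a node and trivializing $\tilde{\cF}$ on each of the two branches (the ranks agree because $u$ is an isomorphism), $u$ becomes an invertible constant matrix, which I absorb into a re-trivialization of $\tilde{\cF}$ along the $\tau$-branch, reducing to $u = \mathrm{id}$. In that case $\tilde{\cF}^u$ is the module of pairs $(f(x), g(y)) \in \bC[x]^r \times \bC[y]^r$ with $f(0) = g(0)$, and a direct check shows this is free over $\bC[x,y]/(xy)$ on the $r$ constant pairs $(e_i, e_i)$, using that the ideal $(x,y)$ of $\bC[x,y]/(xy)$ is $x\bC[x] \oplus y\bC[y]$ as a $\bC$-subspace. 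Tensoring this description over $\cO_X$ with $\cO_{\tilde{X}}$ along either branch kills one variable and returns $\tilde{\cF}$ together with its chosen frame, which is exactly the canonical map $\pi^* \tilde{\cF}^u \to \tilde{\cF}$ adjoint to $\tilde{\cF}^u \hookrightarrow \pi_* \tilde{\cF}$; so that map is an isomorphism.

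For the converse: given $\cF$ on $X$ and an isomorphism $\psi : \pi^* \cF \xrightarrow{\sim} \tilde{\cF}$, use the canonical identifications $\sigma^* \pi^* \cF = j^* \cF = \tau^* \pi^* \cF$ to set $u := (\tau^* \psi) \circ (\sigma^* \psi)^{-1} : \sigma^* \tilde{\cF} \xrightarrow{\sim} \tau^* \tilde{\cF}$. The composite $\cF \to \pi_* \pi^* \cF \xrightarrow{\pi_* \psi} \pi_* \tilde{\cF}$ then factors through $\tilde{\cF}^u$ — a one-line verification from the definitions of $u$ and of $\tilde{\cF}^u$ — giving a map $\cF \to \tilde{\cF}^u$ of vector bundles on $X$ that is an isomorphism over $X \setminus Z$. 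It is an isomorphism at the nodes as well, either by repeating the local computation above, or by observing that after applying $\pi^*$ it agrees (through the canonical isomorphism $\pi^* \tilde{\cF}^u \cong \tilde{\cF}$) with $\psi$, while a map of coherent sheaves on $X$ whose cokernel is killed by $\pi^*$ must already have vanishing cokernel — a skyscraper at a node is not killed by $\pi^*$, as the local model shows — and a surjection of vector bundles of equal rank over the local ring at a node is forced to be an isomorphism.

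The step I expect to be the crux is the local statement at a node, namely that $\bC[x,y]/(xy) = \bC[x] \times_{\bC} \bC[y]$ is a square along which finitely generated projective modules descend; the rest is bookkeeping. The one point that calls for a little care is that $\pi$ is \emph{not} flat at the nodes, so both the identification $\pi^* \tilde{\cF}^u \cong \tilde{\cF}$ and the bijectivity in the converse have to be read off from the explicit local models rather than deduced from a naive exactness argument; alternatively one can simply quote Milnor's patching theorem and dispense with the computation altogether.
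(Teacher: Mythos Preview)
Your proposal is correct. The paper itself does not give a proof: it simply refers to Proposition~4.4 of \cite{L}, so there is no argument to compare against in any detail. What you have written is the standard Milnor-patching / conductor-square proof of this descent statement, and is almost certainly what the cited reference contains; your care about the non-flatness of $\pi$ at the nodes and the explicit local model $\bC[x,y]/(xy) = \bC[x]\times_{\bC}\bC[y]$ are exactly the points that need attention. The only thing the paper adds beyond the citation is the observation, used later in the proof of Theorem~\ref{normalization}, that the short exact sequence $0 \to \cF \to \pi_*\pi^*\cF \to \pi_*\tau_*\tau^*\pi^*\cF \to 0$ is exact---which is just your equalizer description read as an exact sequence.
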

\begin{proof} 
See Proposition 4.4 in \cite{L}.
\end{proof}

Recall that a nodal curve is a curve having only double points as singularities. 
\begin{theorem}
\label{normalization}
Let $X$ be a nodal curve, with singular set $Z$, and normalization $\pi: \tilde{X} \rightarrow X$. Let $\sigma, \tau: Z \rightarrow X$ be two non-overlapping sections of $\pi^{-1}(Z) \rightarrow Z$, then the diagram 
$$
\xymatrix{
\ \Perf(X) \ar[r]^{\pi^{*}}   & D^b(Coh(\tilde{X})) \ar@<1ex>[r]^{\sigma^*} \ar@<-0.5ex>[r]_{\tau^*}& D^b(Coh(Z))}
$$
is an equalizer of dg categories.
\end{theorem}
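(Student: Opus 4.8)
The plan is to reduce Theorem~\ref{normalization} to the Zariski-descent statement of Theorem~\ref{gluing} by choosing an explicit affine (or at least Zariski) open cover of $X$ adapted to the nodes, and then to identify the resulting homotopy fiber product with the claimed equalizer via Lemma~\ref{bundles}. Concretely, let $Z = \{z_1,\dots,z_k\}$ be the node set. First I would choose, for each node $z_j$, a small affine open $V_j \ni z_j$ containing no other node, and let $U = X \setminus Z$; then $\{U, V_1, \dots, V_k\}$ is a Zariski open cover of $X$. Since $\Perf(-)$ is a Zariski sheaf (Theorem~\ref{gluing}, iterated over the cover), $\Perf(X)$ is the homotopy limit of the \v{C}ech diagram built from $\Perf(U)$, the $\Perf(V_j)$, and the double overlaps $\Perf(U \cap V_j) = \Perf(V_j \setminus \{z_j\})$ — the triple overlaps being empty because the $V_j$ are pairwise disjoint away from $U$ and each $V_j$ contains a single node.

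The second step is local analysis at a single node. Near $z_j$, the normalization $\pi$ separates $z_j$ into the two points $\sigma(z_j), \tau(z_j)$, and $\pi^{-1}(V_j)$ is an open subset $\tilde V_j$ of $\tilde X$ containing both. The key is to show that the square
$$
\xymatrix{
\Perf(V_j) \ar[r] \ar[d] & \Perf(\tilde V_j) \ar[d] \\
\Perf(V_j \setminus \{z_j\}) \ar[r] & \Perf(\tilde V_j \setminus \{\sigma(z_j), \tau(z_j)\})
}
$$
is a homotopy pullback, or — what is cleaner — that $\Perf(V_j)$ is the equalizer of $\sigma_j^*, \tau_j^* : \Perf(\tilde V_j) \rightrightarrows \Perf(\kappa(z_j))$, where $\sigma_j^*, \tau_j^*$ are the (derived) pullbacks to the two preimage points. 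This is exactly the affine, single-node version of the theorem, and it follows from Lemma~\ref{bundles}: a perfect complex on the affine node is, up to the usual homological bookkeeping, a perfect complex on the two branches together with a gluing isomorphism of their fibers at the preimages of $z_j$, which is precisely an object of the equalizer. Passing from vector bundles to perfect complexes is handled by noting $\Perf$ of a (Noetherian, finite-dimensional) scheme is generated under shifts and cones by vector bundles, or by working with the bounded derived category directly and truncating; I would phrase this via Lemma~\ref{bundles} applied to each cohomology sheaf together with a standard dévissage.

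The third step is to assemble the local pictures: the global normalization $\tilde X = \bigsqcup D_i$ satisfies $D^b(Coh(\tilde X)) = \prod_i D^b(Coh(D_i))$, and pulling back along $\pi$ over the cover $\{U, V_j\}$ identifies $\pi^*$ with the restriction of the \v{C}ech description to $\Perf(\tilde X)$; the two maps $\sigma^*, \tau^*$ to $D^b(Coh(Z)) = \prod_j D^b(Coh(\kappa(z_j)))$ are the product of the local $\sigma_j^*, \tau_j^*$. Matching the homotopy limit from Theorem~\ref{gluing} (over $U$, the $V_j$, and overlaps) against the equalizer of $\sigma^*, \tau^*$ is then a diagram chase: the contribution of $\Perf(U)$ together with the $\Perf(V_j \setminus \{z_j\})$ reorganizes into $D^b(Coh(\tilde X))$ (since $\tilde X = U' \cup \bigcup \tilde V_j$ with $U'$ the preimage of $U$, and $\Perf = D^b(Coh)$ here because $\tilde X$ is smooth), while the remaining overlap data contributes exactly $D^b(Coh(Z))$ with the two restriction maps. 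I expect the main obstacle to be the homological care in the local step — controlling derived pullbacks to the (non-Cartier, in $\tilde X$) points $\sigma(z_j), \tau(z_j)$ and checking that $\text{Tor}$-vanishing makes $\sigma_j^*, \tau_j^*$ behave as naively expected, so that Lemma~\ref{bundles} genuinely upgrades from bundles to all of $\Perf$ — together with the bookkeeping needed to see that triple overlaps drop out and the \v{C}ech homotopy limit collapses to a two-term equalizer.
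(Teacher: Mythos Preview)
Your overall strategy---reduce via Zariski descent (Theorem~\ref{gluing}) and then invoke Lemma~\ref{bundles} in the local/affine situation---is exactly the paper's. The paper's execution is considerably cleaner, however. Rather than choosing the specific cover $\{U, V_1,\dots,V_k\}$ and then having to reassemble the \v{C}ech diagram into an equalizer, the paper simply observes that \emph{limits commute with limits}: both $\Perf(-)$ and the equalizer of $\sigma^*,\tau^*$ are limits over compatible affine covers of $X$, $\tilde X$, and $Z$, so it suffices to prove the statement when $X$ itself is affine. Your ``diagram chase'' in step~3 is in effect a hand-rolled version of this observation, and once you see it that way the bookkeeping you anticipate evaporates.

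In the affine case the paper is also more direct than your sketch. Instead of d\'evissage on cohomology sheaves (which is problematic: cohomology sheaves of perfect complexes on a nodal curve need not be vector bundles, so Lemma~\ref{bundles} does not apply to them), the paper uses that line bundles generate $\Perf(X)$ and that pairs $(\tilde\cF,u)$ with $\tilde\cF$ a line bundle generate the equalizer $E$. It writes down the comparison functor $\psi:\Perf(X)\to E$ explicitly on line bundles, checks essential surjectivity using the $(\bC^*)^{|Z|}$-torsor structure coming from Lemma~\ref{bundles}, and checks full faithfulness by reducing to exactness of an explicit short exact sequence of global sections. Finally, your worry about Tor-vanishing at $\sigma(z_j),\tau(z_j)$ is misplaced: these are smooth points of $\tilde X$, hence Cartier, and the derived pullbacks $\sigma^*,\tau^*$ behave as naively as possible on perfect complexes.
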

\begin{proof}
Since limits commute with limits, it is sufficient, after Theorem \ref{gluing}, to prove the claim for affine $X$, so we will rectrict to this case. Let $E$ be the equalizer of the diagram
$$
\xymatrix{D^b(Coh(\tilde{X})) \ar@<1ex>[r]^{\sigma^*} \ar@<-0.5ex>[r]_{\tau^*}& D^b(Coh(Z))}
$$ 
constructed according to the prescriptions of Lemma \ref{homotopy eq}. Recall that the objects of $E$ are pairs $(\tilde{\cF}, u)$, where $\tilde{\cF}$ is an object of $D^b(Coh(\tilde{X}))$, and $u$ is a degree zero, closed morphism 
$u:\sigma^*\tilde{\cF} \cong \tau^*\tilde{\cF}$, which becomes invertible in the homotopy category. The morphisms of $E$ are pairs $(f, H) \in hom^k(\tilde{\cF}, \tilde{\cG})\oplus hom^{k-1}(\sigma^*\pi^*\tilde{\cF}, \tau^*\pi^*\tilde{\cG})$, and the differential is given by $d(f, H) = (df, dH - (u'\sigma^*(f) - \tau^*(f) u))$.

Fix a natural equivalence $\alpha: \sigma^* \pi^* \cong \tau^* \pi^*$. As $\Perf(X)$ is generated by line bundles, and $E$ is generated by objects of the form $(\tilde{\cF}, u)$ with $\tilde{\cF}$ a line bundle on $\tilde{X}$, it is sufficient to define a (quasi-)equivalence $\psi$ between these two linear sub-categories. Define $\psi$ as follows,
\begin{itemize}
\item if $\cF$ is a line bundle on $X$, then $\psi(\cF) = (\pi^* \cF, \sigma^*\pi^* \cF \stackrel{\alpha}{\rightarrow} \tau^*\pi^* \cF)$,
\item if $\cF, \cG$ are line bundles on $X$, and $f \in hom^k(\cF, \cG)$, then $\psi(f) = (\pi^*f, 0)$.
\end{itemize}
Consider a line bundle $\tilde{\cF}$ over $\tilde{X}$. It follows from Lemma \ref{bundles} that the set of isomorphism classes of line bundles $\cF$ on $X$ such that $\pi^*\cF \cong \tilde{\cF}$ carries a transitive action by $(\bC^*)^{|Z|}$ (given by pointwise rescaling the `compatibility' isomorphisms $u$, see Lemma \ref{bundles}). Further, the same is true for the set of isomorphism classes of objects of $(\tilde{\cG}, v) \in E$, such that $(\tilde{\cG}, v) \cong (\tilde{\cF} , u)$ for some $u \in hom^0(\sigma^*\tilde{\cF}, \tau^*\tilde{\cF})$. Essential surjectivity follows from the fact that $\psi$ defines a $(\bC^*)^{|Z|}$-equivariant map between these two sets of isomorphism classes.

We shall prove next that $\psi$ is quasi-fully faithful, i.e. that the map between hom-complexes defined by $\psi$ induces an isomorphism in the homotopy category. Denote $HoE$ the homotopy category of $E$. It is sufficient to show that for all line bundles $\cF$ on $X$, and for all $i \in \bN$,
$$
\psi: Hom_X^i(\cO_X, \cF) (= H^i_X(\cF)) \stackrel{\cong}{\rightarrow} Hom^i_{HoE}(\psi(\cO_X), \psi(\cF)).
$$ 
Note that, as $X$ and $\tilde{X}$ are affine, cohomology vanishes in positive degree. It follows that $Hom^i_{HoE}(\psi(\cO_X), \psi(\cF)) = 0$ for all $i > 0$. \footnote{Note that $Hom^1_{HoE}(\psi(\cO_X), \psi(\cF))$ vanishes, since it is isomorphic to the quotient of $Hom^0_Z(\sigma^*\cO_{\tilde{X}}, \tau^*\pi^*\cF) \cong \bC$ by the image of the differential, which is easily seen to be surjective.}. Further, in degree-zero, the hom-space fits in the following short exact sequence
$$
0 \rightarrow Hom^0_{HoE}(\psi(\cO_X), \psi(\cF)) \rightarrow Hom^0_{\tilde{X}}(\cO_{\tilde{X}}, \pi^*\cF) \rightarrow Hom^0_{Z}(\sigma^*\cO_{\tilde{X}}, \tau^*\pi^*\cF) \rightarrow 0.
$$
Thus, proving fully faithfulness boils down to showing exactness of 
\begin{equation}
\label{eq:1}
0 \rightarrow H^0_X(\cF) \stackrel{\pi^*}{\rightarrow} H^0_{\tilde{X}}(\pi^*\cF) \rightarrow H^0_{Z}(\tau^*\pi^*\cF)  \rightarrow 0.
\end{equation}
Now, (\ref{eq:1}) is obtained by taking global sections of the sequence 
$$
0   \rightarrow \mathcal{F}  \rightarrow \pi_* (\pi^* \mathcal{F})  \rightarrow \pi_*\tau_*\tau^*(\pi^*\mathcal{F}) \rightarrow    0,
$$
which is exact (see the proof of Proposition 4.4 of \cite{L}). Since $X$ is affine, taking global section is an exact operation, and this concludes the proof of Theorem \ref{normalization}. 
\end{proof}

\subsection{HMS for nodal elliptic curves}
\label{sec hms}
In this section we will prove that the category of perfect complexes over $X_n$ is quasi-equivalent to $\cpm(\Gamma_n)$. Granting Conjecture \ref{conj}, this result confirms well known mirror symmetry heuristics, which suggest that the mirror of $X_n$ should be a symplectic torus with $n$ punctures, $\bT_n$.\footnote{Kontsevich announced related results in \cite{K1}. HMS for the nodal $\bP^1$ is also treated in \cite{LP}.}
\begin{theorem}[\cite{Be}]
\label{beilinson}
There is an equivalence $\beta: D^b(Coh(\bP^1)) \stackrel{\cong}{\rightarrow} Rep(\bullet \rightrightarrows \bullet)$.
\end{theorem}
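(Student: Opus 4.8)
The plan is to realize $\beta$ via a \emph{tilting object}, which is Beilinson's original idea. Set $T := \cO_{\bP^1} \oplus \cO_{\bP^1}(1)$, regarded as an object of $D^b(Coh(\bP^1))$. The strategy is to check that $T$ is a tilting object --- it has no higher self-extensions, its endomorphism algebra is the Kronecker path algebra, and it classically generates $D^b(Coh(\bP^1))$ --- and then to invoke the classical derived Morita theory (Bondal, Rickard), in its dg-enhanced form, to conclude that $R\Hom(T,-)$ is a quasi-equivalence onto the dg category of modules over $\Hom(T,T)$, which is precisely $Rep(\bullet \rightrightarrows \bullet)$.

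First I would dispatch the two easy points. Since $\bP^1$ is a smooth curve, $\Ext^k$ between coherent sheaves vanishes for $k \geq 2$, so the only higher self-extensions of $T$ to rule out are the $\Ext^1$'s, which are direct sums of groups $H^1(\bP^1, \cO(d))$ with $d \in \{-1, 0, 1\}$; all of these vanish, so $\Ext^{>0}(T,T) = 0$. For the endomorphism algebra $A := \Hom(T,T)$, one uses $\Hom(\cO,\cO) = \Hom(\cO(1),\cO(1)) = \bC$, $\Hom(\cO(1),\cO) = H^0(\bP^1,\cO(-1)) = 0$, and $\Hom(\cO,\cO(1)) = H^0(\bP^1,\cO(1)) = \bC\langle x_0, x_1 \rangle$; this identifies $A$ with the path algebra of the quiver $\bullet \rightrightarrows \bullet$, the two arrows being multiplication by the homogeneous coordinates $x_0, x_1$. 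Under the resulting equivalence the functor $\beta = R\Hom(T,-)$ sends a complex $\cF$ to the representation $R\Gamma(\bP^1, \cF(-1)) \rightrightarrows R\Gamma(\bP^1, \cF)$, with the two maps induced by $x_0$ and $x_1$.

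The one substantive step is generation: one must show that the smallest thick triangulated subcategory $\langle T \rangle \subset D^b(Coh(\bP^1))$ containing $\cO$ and $\cO(1)$ is the whole category. A hands-on argument runs as follows: the twisted Euler (Koszul) sequence $0 \to \cO(-1) \to \cO^{\oplus 2} \to \cO(1) \to 0$ puts $\cO(-1)$ into $\langle T \rangle$, the structure sequence $0 \to \cO(-1) \to \cO \to \cO_p \to 0$ of a point then puts every skyscraper $\cO_p$ into $\langle T \rangle$, and twisting these two sequences propagates the conclusion to all line bundles $\cO(d)$ and all skyscrapers; since every coherent sheaf on $\bP^1$ is an extension of a torsion sheaf (filtered by skyscrapers) by a vector bundle (a direct sum of line bundles, by Grothendieck), all of $Coh(\bP^1)$ --- hence all of $D^b(Coh(\bP^1))$ --- lies in $\langle T \rangle$.

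I expect this generation step to be where the real content sits, and the cleanest way to make it airtight --- and simultaneously to exhibit an explicit quasi-inverse to $\beta$ --- is Beilinson's \emph{resolution of the diagonal}: the Koszul complex on $\bP^1 \times \bP^1$ cutting out $\Delta$ gives $0 \to \cO(-1) \boxtimes \cO(-1) \to \cO \boxtimes \cO \to \cO_\Delta \to 0$, and convolving an arbitrary $\cF$ with $\cO_\Delta$ through this resolution (using the projection formula together with the cohomology of $\cO$ and $\cO(-1)$ on $\bP^1$) produces a functorial two-term complex --- equivalently a convergent spectral sequence --- reconstructing $\cF$ out of $\cO \otimes R\Gamma(\cF)$ and $\cO(1) \otimes R\Gamma(\cF(-1))$. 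The only genuinely technical labor is the bookkeeping of this convolution and the verification that the resulting functor is inverse to $R\Hom(T,-)$ at the dg level; properties (i) and (ii) above are nothing more than the cohomology of line bundles on $\bP^1$.
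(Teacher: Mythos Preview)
Your proposal is correct and follows exactly Beilinson's original tilting argument, which is precisely what the paper invokes: the paper does not give an independent proof but simply cites \cite{Be} and records the functor $\beta(\cF) = \bigl(R\Gamma(\cF(-1)) \rightrightarrows R\Gamma(\cF)\bigr)$, which agrees with your $R\Hom(T,-)$ for $T = \cO \oplus \cO(1)$. You have supplied the details the paper omits; there is nothing to correct.
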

\begin{proof} For the proof, see Beilinson's famous paper \cite{Be}, which provides analogous descriptions of $D^b(Con(\bP^n))$ for any $n$. The functor $\beta$ can be defined as follows. Fix a basis $x_0, x_1$ for $H^0(\cO(1))$, and set
$$
\cF \in D^b(Coh(\bP^1)) \longmapsto \beta(\cF) = \xymatrix{R\Gamma(\cF\otimes\cO(-1)) \ar@<0.7ex>[r]^(0.6){\cdot x_0} \ar@<-0.5ex>[r]_(0.6){\cdot x_1}& R\Gamma(\cF)} \in Rep(\bullet \rightrightarrows \bullet),
$$
with the obvious definition on morphisms.
\end{proof}
In view of results of Nadler and Zaslow \cite{NZ, N} discussed in Section \ref{cpm}, Proposition \ref{beilinson}, combined with the equivalence $Rep(\bullet \rightrightarrows \bullet) \cong Sh(S^1, \Lambda_p)$, yields a homological mirror symmetry statement pairing $D^b(Coh(\bP^1))$, and a suitable Fukaya category of exact Lagrangians in $T^*S^1$. This was explained as an instance of T-duality by Fang \cite{F}, and fits in the framework of the \emph{coherent-constructible correspondence} developed by Fang, Liu, Treumann and Zaslow (see \cite{FLTZ}), which is one of the starting points for the project of \cite{STZ}.\footnote{The significance for mirror symmetry of the equivalence $D^b(Coh(\bP^1)) \cong Sh(S^1, \Lambda_p)$ was first advocated by Bondal \cite{B}, in the context of HMS for weighted projective spaces.}

\begin{theorem}[\cite{STZ}]
\label{thm hms}
Let $X_n$ be a cycle of $n$ projective lines. There is a quasi-equivalence $\phi: \Perf(X_n) \cong \cpm(\Gamma_n)$.\footnote{The result proved in \cite{STZ} is actually more general, and extends to appropriate \emph{stacky} degenerations of elliptic curves.}
\end{theorem}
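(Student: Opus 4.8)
The strategy is to identify both sides with homotopy equalizers of the same \v{C}ech-type diagram and then check that the two diagrams agree. On the algebro-geometric side, apply Theorem~\ref{normalization} to $X = X_n$: its normalization is $\widetilde{X_n} = \bigsqcup_{i=1}^n D_i$ with $D_i \cong \bP^1$, the singular set $Z$ consists of $n$ reduced points, and the two sections $\sigma, \tau: Z \to \widetilde{X_n}$ can be chosen so that, component by component, $\sigma$ picks out the point $0 \in D_i$ and $\tau$ picks out the point $\infty \in D_{i+1}$ (indices mod $n$), matching the cyclic gluing pattern of a cycle of projective lines. Thus $\Perf(X_n)$ is the equalizer of
$$
\xymatrix{
\ \underset{i \in I}{\prod} D^b(Coh(\bP^1)) \ar@<1ex>[r]^{\sigma^*} \ar@<-0.5ex>[r]_{\tau^*}& \underset{i \in I}{\prod} D^b(Coh(\mathrm{pt})).
}
$$
On the symplectic side, by construction $\cpm(\Gamma_n)$ is the homotopy equalizer of the \v{C}ech diagram~(\ref{eq cpm}), namely of
$$
\xymatrix{
\ \underset{i \in I}{\prod} Sh(S^1, \Lambda_p) \ar@<1.5ex>[r]^{Res_+} \ar@<0.0ex>[r]_{Res_-}& \underset{j \in I}{\prod} \bC\text{-}mod.
}
$$

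\textbf{Matching the two diagrams.} The next step is to produce an equivalence between the two parallel-pair diagrams. Use Beilinson's equivalence (Theorem~\ref{beilinson}) $\beta: D^b(Coh(\bP^1)) \xrightarrow{\cong} Rep(\bullet \rightrightarrows \bullet) \cong Sh(S^1, \Lambda_p)$ on each of the $n$ vertices to identify the left-hand corners, and the evident equivalence $D^b(Coh(\mathrm{pt})) \cong \bC\text{-}mod$ on the right-hand corners. It then remains to verify that, under these identifications, the restriction pair $(\sigma^*, \tau^*)$ is intertwined with $(Res_+, Res_-)$ up to natural isomorphism. Concretely, one must check that evaluating $\beta(\cF)$ (for $\cF \in D^b(Coh(\bP^1))$) at the vertex of the quiver corresponding to $R^+$ (resp. $R^-$) — which by Remark~\ref{wheels} means taking $\Cone$ of one of the two structure maps — reproduces the fiber $\sigma^*\cF$ at $0 \in \bP^1$ (resp. $\tau^*\cF$ at $\infty \in \bP^1$). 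This is a direct computation: with the basis $x_0, x_1$ of $H^0(\cO(1))$, the map $\cdot x_0: R\Gamma(\cF(-1)) \to R\Gamma(\cF)$ has cone computing the (derived) stalk of $\cF$ at the zero locus of $x_0$, and similarly for $x_1$; choosing the basis so that $\{x_0 = 0\} = \{0\}$ and $\{x_1 = 0\} = \{\infty\}$ aligns these cones with $\sigma^*, \tau^*$ respectively. One must also track the cyclic bookkeeping so that the ``$+$ side'' of $\Lambda_i$ is glued to the ``$-$ side'' of $\Lambda_{i+1}$ in the push-out defining $\Gamma_n$, exactly as $0 \in D_i$ is glued to $\infty \in D_{i+1}$ in $X_n$.

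\textbf{Conclusion and main obstacle.} Once the two diagrams are identified, an equivalence of diagrams induces an equivalence on homotopy equalizers (this is where Lemma~\ref{homotopy eq} is used, giving explicit control of objects and morphisms on both sides and making the induced functor transparent), yielding the desired quasi-equivalence $\phi: \Perf(X_n) \cong \cpm(\Gamma_n)$. I expect the main obstacle to be the compatibility check in the previous paragraph: not the cone computation itself, which is standard, but the careful coherent choice of the natural isomorphisms $\beta \circ \sigma^* \Rightarrow Res_+ \circ \beta$ and $\beta \circ \tau^* \Rightarrow Res_-\circ \beta$ so that they are genuinely part of an equivalence of diagrams (not merely objectwise equivalences), together with the correct handling of the cyclic indexing and of the auxiliary data (the natural equivalence $\alpha$ of Theorem~\ref{normalization} and the choice of orientation on $T^*S^1$ from Remark~\ref{wheels}) so that the bundles-to-spherical-objects dictionary is consistent around the whole cycle. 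A secondary, more technical point is to ensure that passing from the strict equalizer of Theorem~\ref{normalization} to the homotopy equalizer of Lemma~\ref{homotopy eq} on the coherent side is harmless — which follows since $\pi^*$ is already suitably fibrant-behaved over affines, as exploited in the proof of Theorem~\ref{normalization}.
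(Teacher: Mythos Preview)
Your proposal is correct and follows essentially the same approach as the paper: both sides are exhibited as homotopy equalizers of parallel pairs, the vertices are matched via Beilinson's equivalence $\beta$ and the identification $D^b(Coh(\mathrm{pt}))\cong\bC\text{-}mod$, and the compatibility $\rho\circ\sigma^*\cong Res_+\circ\beta$, $\rho\circ\tau^*\cong Res_-\circ\beta$ (your cone computation) yields the equivalence of equalizers. If anything, you supply more justification for the compatibility step than the paper's proof, which simply asserts that a suitable $\rho$ exists; note a small typo in your final paragraph where ``$\beta\circ\sigma^*$'' should read ``$\rho\circ\sigma^*$'' (or just $\sigma^*$ after identifying the targets).
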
 
\begin{proof}
Let $Z \hookrightarrow X_n$ be the singular set. Pick two non-overlapping sections $\sigma$, and $\tau$ of $p^{-1}(Z) \rightarrow Z$, as in Theorem \ref{normalization}, and choose an identification $D^b(Coh(Z)) \cong \prod_{j=1}^{j=n} \bC-mod$. 
The proof is encoded in the following diagram,
$$
\xymatrix{
\ \overset{}{\Perf(X_n)} \ar[r] \ar@{-->}[d]^{ \phi} &  \prod_{i=1}^{i=n} D^b(Coh(\bP^1)) \ar[d]^{\beta} \ar@<1.0ex>[r]^{\sigma^*} \ar@<-0.5ex>[r]_{\tau^*} & \prod_{j=1}^{j=n} \bC-mod \ar[d]^{\rho} \\
\ \cpm({\Gamma_n}) \ar[r] & \prod_{i=1}^{i=n} Rep(\bullet \rightrightarrows \bullet) \ar@<1.0ex>[r]^{Res^+} \ar@<-0.5ex>[r]_{Res^-} &  \prod_{j=1}^{j=n} \bC-mod.
}
$$
In fact, we can choose $\rho$ in such a way that $\rho\circ\sigma^* \cong Res^+\circ\beta$, and $\rho\circ\tau^* \cong Res^-\circ\beta$, where, abusing notation, we are denoting $\times_{i=1}^{i=n}\beta$ simply by $\beta$. This implies that the equalizer of $\sigma^*, \tau^*$ is quasi-equivalent to the equalizer of $Res^+, Res^-$ (see diagram \ref{eq cpm}), and yields $\phi: \Perf(X_n)\stackrel{\cong}{\rightarrow} \cpm(\Gamma_n)$.
\end{proof}

\section{A mapping class group action on $D^b(Coh(X_n))$}
\label{mcg} 
Since the group of symplectic automorphisms of $\bT_n$ acts by auto-equivalences on $Fuk(\bT_n)$, HMS predicts the existence of a mirror action on $D^b(Coh(X_n))$.\footnote{Note in fact that, although the HMS statement of Section \ref{sec hms} involves $\Perf(X_n)$, it is possible to show that $D^b(Coh(X_n))$ and $\Perf(X_n)$ have the same group of auto-equivalences.} This is the content of the main theorem of \cite{Si}, which we state below. 
\begin{theorem}[\cite{Si}]
\label{thrm:action}
Let $\mathrm{PM}(\bT_n)$ be the pure mapping class group of $\bT_n$, then, up to shift, there is an action of $\mathrm{PM}(\bT_n)$ over $D^b(Coh(X_n))$.
\end{theorem}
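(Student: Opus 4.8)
The plan is to transport the action of $\mathrm{PM}(\bT_n)$ on $Fuk(\bT_n)$ across the homological mirror symmetry equivalence of Theorem \ref{thm hms}, but since Conjecture \ref{conj} is not available, the construction must be carried out intrinsically on the $\cpm$ side, and hence on $\Perf(X_n) \cong \cpm(\Gamma_n)$. First I would recall the standard presentation of $\mathrm{PM}(\bT_n)$: the pure mapping class group of the $n$-punctured torus is generated by Dehn twists along a suitable finite collection of simple closed curves, subject to braid-type and commutation relations. For $n=2$, on which the exposition focuses, one has a small explicit generating set, so it suffices to (i) assign to each generating Dehn twist $T_\gamma$ an auto-equivalence $\Phi_\gamma$ of $D^b(Coh(X_n))$, and (ii) verify that the $\Phi_\gamma$ satisfy the defining relations up to shift.

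For step (i), the key is that a Dehn twist along an embedded curve $\gamma \subset \bT_n$ is mirror to a spherical twist functor in the sense of Seidel--Thomas \cite{SeT}. Concretely, a simple closed curve $\gamma$ that bounds (together with punctures) corresponds under HMS to a spherical object $E_\gamma \in D^b(Coh(X_n))$ — for instance a line bundle on a component $D_i \cong \bP^1$, or more generally a sheaf supported on a subconfiguration of $X_n$ whose $\Ext$-algebra is $H^*(S^1)$. One checks the spherical object axioms directly: $\Ext^\bullet(E_\gamma, E_\gamma) \cong H^\bullet(S^1; \bC)$, and the Serre functor on $D^b(Coh(X_n))$ (which exists because $X_n$ is Gorenstein with trivial dualizing sheaf, $X_n$ being a cycle of $\bP^1$'s) acts as a shift on $E_\gamma$. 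Then $\Phi_\gamma := T_{E_\gamma}$, the Seidel--Thomas twist along $E_\gamma$, is an auto-equivalence of $D^b(Coh(X_n))$. The translation between the $\cpm$ presentation and the sheaf-theoretic picture — identifying which curves give which spherical objects — is where Theorem \ref{normalization} and the \v{C}ech description \eqref{eq cpm} do the work: a curve is built locally out of the quiver data $Rep(\bullet \rightrightarrows \bullet) \cong Sh(S^1,\Lambda_p)$ glued along the $\bR_{>0}$'s, and one reads off its mirror object component by component.

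For step (ii), I would invoke the braid-relation theorem of Seidel--Thomas: if $E, E'$ are spherical objects with $\dim \Ext^\bullet(E,E') = 1$ then $T_E T_{E'} T_E \cong T_{E'} T_E T_{E'}$, while if $\Ext^\bullet(E,E') = 0$ the twists commute. Thus the relations among the $\Phi_\gamma$ follow automatically provided the geometric intersection data of the curves $\gamma$ matches the $\Ext$-dimensions of the mirror objects $E_\gamma$ — which is precisely a shadow of HMS and can be verified by hand in the $n=2$ case. One must also account for the fact that the center of the mapping class group and certain boundary twists act by shifts, which is why the theorem asserts an action only \emph{up to shift}; formally, one works in the quotient of $\mathrm{Auteq}(D^b(Coh(X_n)))$ by the subgroup generated by $[1]$, or equivalently produces a genuine action of a central extension.

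The main obstacle will be step (i) made precise: establishing that the chosen objects are genuinely spherical, and more delicately that the assignment $\gamma \mapsto E_\gamma$ is well-defined and natural enough that the word problem in $\mathrm{PM}(\bT_n)$ maps faithfully to compositions of twists. The subtlety is that different curves isotopic in $\bT_n$ must give isomorphic (or at least twist-equivalent) objects, and that the $\Ext$-dimension computations match geometric intersection numbers on the nose; both require the explicit HMS dictionary of Theorem \ref{thm hms} rather than the conjectural $Fuk(\bT_n)$. For $n=2$ this is a finite, checkable computation, which is why \cite{Si} — and this review — treat that case in detail; the general $n$ case proceeds along the same lines once a convenient generating set of Dehn twists and their mirror spherical objects is fixed.
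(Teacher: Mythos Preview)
Your overall architecture---assign to each generating Dehn twist a Seidel--Thomas spherical twist and then verify the relations in a presentation of $\mathrm{PM}(\bT_n)$---is exactly the strategy the paper follows. Two points, however, separate your sketch from a proof.

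First, the spherical objects. You propose ``a line bundle on a component $D_i \cong \bP^1$, or more generally a sheaf supported on a subconfiguration.'' The paper instead uses the structure sheaf $\cO = \cO_{X_n}$ (mirror to the curve $\alpha$) and skyscraper sheaves $\kappa(x_i)$ at smooth points $x_i$ on distinct components (mirror to the curves $\beta_i$). These are the objects for which the spherical condition and the $\Ext$ computations are transparent, and for which the twist functors have explicit descriptions: $T_{\kappa(x)} \cong -\otimes\cO(x)$, while $T_{\cO}$ acts on the relevant objects as recorded in Lemma~\ref{comp}. The HMS/$\cpm$ dictionary plays no logical role in the argument; it is purely motivational.

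Second, and this is the genuine gap: your step~(ii) claims that ``the relations among the $\Phi_\gamma$ follow automatically'' from the Seidel--Thomas braid-relation theorem once intersection numbers match $\Ext$ dimensions. That theorem only yields the braid and commutation relations. The presentation of $\mathrm{PM}(\bT_2)$ (Proposition~\ref{two}) has an additional relation, the $G$-relation $(T_{\beta_1}T_\alpha T_{\beta_2})^4 = 1$, and its lift $(T_{\beta_1}T_\alpha T_{\beta_2})^4 = t^2$ in the central extension. Nothing in \cite{SeT} gives this for free. The paper's mechanism is: (a) compute directly, using Lemma~\ref{comp}, that $(T_{\kappa(x_1)}T_{\cO}T_{\kappa(x_2)})^2$ sends $\cO \mapsto \cO[1]$ and permutes $\kappa(x_1), \kappa(x_2)$ up to shift; then (b) invoke a rigidity result, Lemma~\ref{map}, which says that any autoequivalence of $D^b(Coh(X_n))$ fixing $\cO$ and each $\kappa(x_i)$ is naturally isomorphic to pullback along an automorphism of $X_n$. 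This forces $(T_{\kappa(x_1)}T_{\cO}T_{\kappa(x_2)})^2 \cong f^*\sigma^*[1]$ for involutions $f,\sigma$, and squaring gives $[2]$. Without an argument of this type---a way to recognize a composite of twists as a geometric pullback up to shift---the non-braid relations cannot be verified, and your proposal does not supply one.
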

Recall that the mapping class group of an oriented surface $\Sigma$, $\mathrm{MCG}(\Sigma)$, is the group of symplectic automorphisms of $\Sigma$, up to isotopy. The pure mapping class group is the subgroup $\mathrm{PM}(\Sigma) \hookrightarrow \mathrm{MCG}(\Sigma)$ generated by Dehn twists (see \cite{FM}). Theorem \ref{thrm:action} generalizes previous work of Seidel and Thomas \cite{SeT}, and Burban and Kreussler \cite{BK}, who established, respectively, the existence of an $SL(2, \bZ)$-action over $D^b(Coh(X))$, where $X$ is a smooth elliptic curve, and over $D^b(Coh(X_1))$.\footnote{Recall that the mapping class groups of the torus and of the once punctured torus are both isomorphic to $SL(2, \bZ)$.} The proof of Theorem \ref{thrm:action} depends on the availability of an explicit presentation of $\mathrm{PM}(\bT_n)$, worked out in the first Section of \cite{Si}, and on Seidel and Thomas' theory of \emph{spherical objects} and \emph{twist functors} \cite{SeT}, which is designed precisely to test this aspect of mirror symmetry. 

In the rest of this section, we shall briefly describe the proof of Theorem \ref{thrm:action} for $n=2$, leaving out most details, and referring the reader to \cite{Si} for the general case. A presentation of $\mathrm{PM}(\bT_2)$ can be found in \cite{PS}, and is reproduced below.
\begin{proposition}
\label{two}
The pure mapping class group $\mathrm{PM}(\bT_2)$ is generated by $\Ta$, $T_{\beta_1}$ and $T_{\beta_2}$,\footnote{The generators $\Ta, \Tbi$ are given by isotopy classes of Dehn twists along simple closed curves $\alpha, \beta_i \hookrightarrow \bT_2$. Explicit representatives can be described as follows. Identify the torus $\bT$ with $[0,1] \times [0,1] \diagup_\sim$, and set $\bT_2 = \bT \setminus \{ p_1 = (0,0), p_2 = (\frac{1}{2}, 0)\}$, then $\alpha = [0,1] \times \{\frac{1}{2}\}$, and $\beta_i = \{\frac{i}{3}\} \times [0,1], i=1,2$.} with relations
\begin{itemize}
\item (Braid relations) $\Tbi\Tbj = \Tbj\Tbi$, $\Tbi \Ta \Tbi = \Ta \Tbi \Ta$,
\item ($G$-relation) $(T_{\beta_1} \Ta T_{\beta_2})^4 = 1$.
\end{itemize}
\end{proposition}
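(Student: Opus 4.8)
The plan is to derive the presentation by the standard recipe for mapping class groups (the result itself is \cite{PS}; see \cite{FM} for the background), carried out in three steps: the three twists generate, the stated relations hold, and no further relations are needed.

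\emph{Generation.} I would fill in the puncture $p_2$ to obtain the Birman exact sequence
\[
1 \longrightarrow \pi_1(\bT_1) \longrightarrow \mathrm{PM}(\bT_2) \longrightarrow \mathrm{MCG}(\bT_1) \longrightarrow 1,
\]
in which $\pi_1(\bT_1)$ is free of rank $2$ and $\mathrm{MCG}(\bT_1)\cong\SL(2,\bZ)$. The images of $\Ta$ and $T_{\beta_1}$ are Dehn twists along two curves meeting once, hence generate $\SL(2,\bZ)$; and the kernel is generated by the point-pushing maps for $p_2$, each of which is a product of two Dehn twists along curves that — using that $\beta_1$ and $\beta_2$ become isotopic on $\bT_1$ while cobounding, on $\bT_2$, an annulus through $p_2$ — are isotopic to $\alpha,\beta_1,\beta_2$, or to their images under the subgroup already built. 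For instance, pushing $p_2$ once around the vertical loop through it equals $T_{\beta_1}T_{\beta_2}^{-1}$. A short bootstrapping argument (reading the remaining point-pushes as a known twist dragged around a puncture) then gives that $\Ta,T_{\beta_1},T_{\beta_2}$ generate $\mathrm{PM}(\bT_2)$.

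\emph{The relations hold.} Since $\beta_1\cap\beta_2=\emptyset$ we get $T_{\beta_1}T_{\beta_2}=T_{\beta_2}T_{\beta_1}$, and since $|\beta_i\cap\alpha|=1$ we get $\Tbi\Ta\Tbi=\Ta\Tbi\Ta$. For the $G$-relation I would observe that $(\beta_1,\alpha,\beta_2)$ is a $3$-chain, so the $3$-chain relation gives $(T_{\beta_1}\Ta T_{\beta_2})^4=T_{\partial_1}T_{\partial_2}$, where $\partial_1\sqcup\partial_2$ is the boundary of a closed regular neighbourhood $N$ of $\beta_1\cup\alpha\cup\beta_2$. A direct picture shows that $N$ has genus one with two boundary circles and that $\bT_2\setminus N$ consists of two disks, each containing exactly one puncture; hence each $\partial_j$ bounds a once-punctured disk, so $T_{\partial_j}=1$ and $(T_{\beta_1}\Ta T_{\beta_2})^4=1$.

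\emph{Completeness, and the hard part.} Let $G$ be the abstractly presented group; the two previous steps give a surjection $G\twoheadrightarrow\mathrm{PM}(\bT_2)$, and the real work is to prove it injective. I would do this by assembling an explicit presentation of the extension above out of three ingredients: the presentation $\SL(2,\bZ)=\langle a,b\mid aba=bab,\ (aba)^4=1\rangle$ of the quotient, the free presentation of the kernel $\pi_1(\bT_1)$, and the conjugation action of chosen lifts of $a,b$ on the kernel; then eliminating the kernel generators by Tietze transformations and checking that precisely the braid relations and the $G$-relation survive. (An alternative is to let $\mathrm{PM}(\bT_2)$ act on the contractible arc complex of $\bT_2$ and read a presentation off the stabilisers of cells.) The only genuine obstacle is the bookkeeping in this elimination — keeping track of how the point-pushing generators and the lift of the $\SL(2,\bZ)$-relator $(aba)^4$ interact with the braid relations, of which the $G$-relation is in effect the residue; everything in the first two steps is soft. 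This computation is the content of \cite{PS}.
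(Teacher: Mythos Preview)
Your proposal is a correct and standard outline, but note that the paper does not actually prove this proposition: it simply states the presentation as a known result, citing \cite{PS} (``A presentation of $\mathrm{PM}(\bT_2)$ can be found in \cite{PS}, and is reproduced below''), and moves on. So there is nothing to compare against; you have supplied far more than the paper does.

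That said, a couple of comments on the outline itself. Your derivation of the $G$-relation via the $3$-chain relation is clean and correct: the regular neighbourhood of $\beta_1\cup\alpha\cup\beta_2$ is indeed a genus-one surface with two boundary components, and on $\bT_2$ each boundary circle bounds a once-punctured disk, killing the boundary twists. For completeness, the honest acknowledgment that the Tietze-transformation bookkeeping is ``the content of \cite{PS}'' is exactly right; your sketch of how to set it up (Birman sequence, lifts of the $\SL(2,\bZ)$ relations, elimination of point-pushing generators) is the standard route, but the elimination is genuinely delicate and not something one can wave through. The alternative via the arc complex you mention would also work and is arguably more conceptual, though no shorter in practice.
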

Following the discussion in Section 1 of \cite{SeT}, the group acting on $D^bCoh(X_2)$ is going to be a suitable central extension of $\mathrm{PM}(\bT_2)$, whose elements should be viewed as \emph{graded} symplectic automorphisms of the mirror of $X_2$, i.e. $\bT_2$.
\begin{definition}
Define $\widetilde{\mathrm{PM}}(\bT_2)$ as the $\bZ$-central extension of $\mathrm{PM}(\bT_2)$, generated by $\Ta$, $\Tbi, i=1,2$, and a central element $t$ subject to the following relations
\begin{itemize}
\item (Braid relations), as in Proposition \ref{two}
\item ($\tilde{G}$-relation) $(T_{\beta_1} \Ta T_{\beta_2})^4 = t^2$.
\end{itemize}
\end{definition}

Let $x_1$ and $x_2$ be two smooth points lying on different components of $X_2$, then
\begin{theorem}
\label{main}
The assignment
\begin{itemize}
\item for all $i=1,2, \Tbi \mapsto \Txi$,
\item $\Ta \mapsto \To$, and
\item $t \mapsto [1]$,
\end{itemize}
defines an action of $\widetilde{\mathrm{PM}}(\bT_2)$ on $D^b(Coh(X_2)).$
\end{theorem}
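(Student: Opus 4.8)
Since $\widetilde{\mathrm{PM}}(\bT_2)$ is defined by the presentation recalled above, giving it an action on $D^b(Coh(X_2))$ means exhibiting four exact auto-equivalences --- the spherical twists $\Txi,\Txj$ around the skyscrapers $\cO_{x_1},\cO_{x_2}$, the spherical twist $\To$ around $\cO_{X_2}$, and the shift $[1]$ (the image of the central $t$) --- which satisfy the images of the braid relations and of the $\tilde G$-relation; the universal property of the presentation then produces the homomorphism $\widetilde{\mathrm{PM}}(\bT_2)\to\mathrm{Auteq}(D^b(Coh(X_2)))$ to the group of isomorphism classes of exact auto-equivalences, with $[1]$ landing in the central subgroup of shifts. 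So the plan splits into three parts: (i) the three objects are spherical; (ii) the braid relations; (iii) the $\tilde G$-relation.

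For (i), note that $X_2$ is a projective Gorenstein curve with $\omega_{X_2}\cong\cO_{X_2}$, so $D^b(Coh(X_2))$ is a $1$-Calabi--Yau category, and I would just compute the relevant self-$\Ext$'s. As each $x_i$ is a smooth point, a Koszul computation gives $\Ext^\bullet(\cO_{x_i},\cO_{x_i})\cong\bC\oplus\bC[-1]$; and since $X_2$ is connected of arithmetic genus $1$, $\Ext^\bullet(\cO_{X_2},\cO_{X_2})\cong H^\bullet(X_2,\cO_{X_2})\cong\bC\oplus\bC[-1]$. Hence all three are spherical in the sense of \cite{SeT}, and, being perfect, they yield via the cone construction exact auto-equivalences $\Txi,\Txj,\To$ of $D^b(Coh(X_2))$ (the extension of this input to singular Gorenstein curves is due to \cite{BK}).

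For (ii), the three objects form an $A_3$-configuration of spherical objects: $x_1\ne x_2$ have disjoint support so $\Hom^\bullet(\cO_{x_1},\cO_{x_2})=0$, while $\Ext^\bullet(\cO_{X_2},\cO_{x_i})=H^\bullet(X_2,\cO_{x_i})$ is $\bC$ in degree $0$ and zero otherwise (dually, $\Hom^\bullet(\cO_{x_i},\cO_{X_2})$ is $\bC$ in degree $1$), so $\sum_k\dim\Ext^k(\cO_{X_2},\cO_{x_i})=1$. By the braid-relation theorem of \cite{SeT} this forces $\Txi\Txj\cong\Txj\Txi$ and $\Txi\To\Txi\cong\To\Txi\To$ for $i=1,2$, i.e.\ exactly the images of all the braid relations of $\widetilde{\mathrm{PM}}(\bT_2)$.

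The heart of the matter is (iii): I must show $F^4\cong[2]$ for $F:=\Txi\To\Txj$ --- equivalently, in the $A_3$-braid picture, that the image of the full twist $(\sigma_1\sigma_2\sigma_3)^4$ is the shift $[2]$. The plan is to compute $F$, and more efficiently $F^2$, explicitly on a set of generators of $D^b(Coh(X_2))$ --- the line bundles $\cO_{X_2}(D)$ of each multidegree and gluing parameter, together with the torsion sheaves completing them to a generating set --- using the cone description of each spherical twist and the cohomology of line bundles on the cycle $X_2$ (the structure theory underlying Theorems \ref{normalization} and \ref{thm hms}). I expect the output to be that, up to shift, $F^2$ is the inversion involution $\iota^{*}$ of the degenerate elliptic curve $X_2$ composed with $[1]$ --- the analogue of Mukai's formula $\Phi^{2}\cong(-1)^{*}[-1]$ for the Fourier--Mukai transform of a smooth elliptic curve; alternatively one could first identify $F$, up to shift, with the Fourier--Mukai auto-equivalence of $X_2$ attached to its Poincar\'e sheaf and then quote the known formula for its square. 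Granting $F^2\cong\iota^{*}\circ[1]$ with $\iota^{2}=\mathrm{id}$, one gets $F^4\cong(\iota^{*})^{2}[2]\cong[2]$, as required. The real obstacle is this computation: one must track the effect of the three twists through \emph{all} line bundles on $X_2$, including the finitely many ``special'' ones where the cohomology jumps at a node, and --- the delicate point --- carry enough functoriality along to upgrade a bijection on isomorphism classes into an honest natural isomorphism. Concretely, one builds the comparison natural transformation $\mathrm{id}\Rightarrow F^{2}\circ(\iota^{*}[1])^{-1}$ (equivalently $\mathrm{id}\Rightarrow F^{4}[-2]$) from the functorial cones defining the twists, and checks it is invertible on the chosen generators; its cone is then an exact functor vanishing on a generating set, hence zero. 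Once (i)--(iii) are in place, the presentation of $\widetilde{\mathrm{PM}}(\bT_2)$ delivers the action.
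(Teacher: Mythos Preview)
Your outline for parts (i) and (ii) matches the paper's: sphericality via the $1$-Calabi--Yau property of $X_2$, and the braid relations via the $A_3$-configuration criterion of \cite{SeT}.

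The divergence is in part (iii). You propose to establish $F^4\cong[2]$ by computing $F^2$ on a full generating set of line bundles and torsion sheaves, and then promoting a bijection on isomorphism classes to a natural isomorphism --- a step you correctly flag as the ``real obstacle.'' The paper avoids this entirely by a reconstruction lemma in the spirit of Bondal--Orlov (Lemma~\ref{map}): any auto-equivalence of $D^b(Coh(X_n))$ that fixes $\cO$ and each $\kappa(x_i)$ is necessarily the pullback $f^*$ along an automorphism $f$ of $X_n$. With this in hand, the paper only computes $F^2$ on the \emph{three} objects $\cO,\kappa(x_1),\kappa(x_2)$, using the elementary identities of Lemma~\ref{comp} (e.g.\ $T_{\kappa(x)}\cong -\otimes\cO(x)$, $\To(\kappa(x))\cong\cO(-x)[1]$). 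One finds $F^2(\cO)\cong\cO[1]$ and $F^2(\kappa(x_i))\cong\kappa(x_{3-i})[1]$; composing with the swap $\sigma^*$ and $[-1]$ puts us in the hypotheses of Lemma~\ref{map}, so $F^2\cong f^*\sigma^*[1]$ for some automorphism $f$, necessarily involutive and commuting with $\sigma$. Squaring gives $F^4\cong[2]$. Thus your expected shape $F^2\cong(\text{involution})^*[1]$ is confirmed, but the paper neither identifies the involution with a specific ``inversion'' $\iota$ nor needs to: the reconstruction lemma converts a check on three objects directly into a natural isomorphism of functors, bypassing the delicate functoriality bookkeeping you anticipated.
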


Note that the assigment described in Theorem \ref{main} is compatible with mirror symmetry considerations, according to which $\cO$ and $\kappa(x_i)$ should be mirror to Lagrangian branes whose supports are isotopic, respectively, to $\alpha$ and $\beta_i$. We state below two lemmas, formulated for a general cycle of projective lines $X_n$, which will be important for proving Theorem \ref{main}. 
\begin{lemma}
\label{map}
Let $F: D^b(Coh(X_n)) \rightarrow D^b(Coh(X_n))$ be an auto-equivalence of triangulated categories. If 
\begin{itemize}
\item $F(\cO) \cong \cO$, and 
\item for all $i \in \{1 \dots n \}$, $F(\gk(x_i)) \cong \gk(x_i)$, 
\end{itemize}
then there exists an isomorphisms $f: X_n \rightarrow X_n$, such that $F$ is naturally equivalent to $f^*: D^b(Coh(X_n)) \rightarrow D^b(Coh(X_n))$.
\end{lemma}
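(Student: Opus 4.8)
The plan is a reconstruction argument in the style of Bondal--Orlov, exploiting that $X_n$ is Gorenstein with trivial dualizing sheaf, $\omega_{X_n}\cong\cO_{X_n}$, so that $\Perf(X_n)$ admits the shift $[1]$ as a Serre functor. Note first that $\Perf(X_n)\subset D^b(Coh(X_n))$ is an intrinsic subcategory --- it is cut out by the condition that $\bigoplus_i\Hom^i(E,G)$ be finite dimensional for every $G\in D^b(Coh(X_n))$ --- so $F$ restricts to an auto-equivalence of $\Perf(X_n)$, and $\cO_{X_n}$ together with all skyscrapers $\kappa(x)$ at smooth points $x\in X_n$ lie in it. I would begin by observing that $[\cO]$ and the classes $[\kappa(x_i)]$ generate the numerical Grothendieck group $K_{\mathrm{num}}(\Perf(X_n))$ (it is generated by the rank, which is well defined since $X_n$ is connected, together with the $n$ component-degrees, and here one uses that the $x_i$ meet every component of $X_n$, one on each); since $F$ fixes all of these, the induced automorphism of $K_{\mathrm{num}}$ is the identity, and in particular $F$ preserves the numerical type $[\kappa(x)]$.

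Next I would single out the smooth skyscrapers categorically. For $x$ smooth, $\kappa(x)$ is a \emph{spherical object} of $\Perf(X_n)$ --- one has $\Hom^\bullet(\kappa(x),\kappa(x))\cong\bC\oplus\bC[-1]$ because $X_n$ is smooth of dimension one at $x$ and $\omega_{X_n}$ is trivial --- of numerical class $[\kappa(x)]$; conversely, a spherical object of class $[\kappa(x')]$ for some smooth $x'$ is simple of rank zero and length one, hence a length-one torsion sheaf supported at a single point, and perfectness forces that point to be smooth, so it is $\kappa(y)[m]$. Since sphericity and, by the previous paragraph, the numerical type are $F$-invariant, $F$ sends smooth skyscrapers to shifted smooth skyscrapers; the identity $\Hom^\bullet(\cO,\kappa(x))=\bC$, concentrated in degree $0$, combined with $F(\cO)\cong\cO$, forces $m=0$. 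Thus $F$ induces a bijection $g$ of the set of closed points of the smooth locus $U=X_n\setminus Z$ with $F(\kappa(x))\cong\kappa(g(x))$.

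The heart of the matter --- and the step I expect to be the main obstacle --- is to promote $g$ to an automorphism of the \emph{scheme} $X_n$. First one characterizes line bundles: $L\in\Perf(X_n)$ is, up to shift, a line bundle precisely when it is simple, has $\Hom^{<0}(L,L)=0$, and satisfies $\Hom^\bullet(L,\kappa(x))=\bC$ in degree $0$ for every smooth $x$ --- rank-one torsion-free sheaves that are not invertible, such as those pushed forward from the normalization near a node, are not perfect and so do not interfere --- the shift again being pinned down against $\cO$. Given $x\in U$, a section $s$ cutting out $x$ exhibits $\kappa(x)\cong\Cone(\cO\xrightarrow{s}\cO(x))$, and applying $F$ yields $\kappa(g(x))\cong\Cone(\cO\xrightarrow{F(s)}F(\cO(x)))$, which identifies $F(\cO(x))$ with $\cO(g(x))$ and matches the divisor $x$ to $g(x)$. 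Running this over all effective divisors supported on $U$ shows that $g$ takes closed subschemes of $U$ to closed subschemes and is compatible with the reconstructed structure sheaf, hence is an isomorphism of $U$ onto itself; since $X_n$ is glued along $Z$ from its normalization $\coprod_i\bP^1$, which is normal, and $g$ respects the incidence and numerical data at the nodes by the first paragraph, $g$ extends across $Z$ to an automorphism $f^{-1}:=g$ of $X_n$.

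It remains to check $F\cong f^{*}$. Replacing $F$ by $(f^{*})^{-1}\circ F$, we may assume $F$ fixes $\cO$ and every $\kappa(x)$ with $x\in U$, hence, by the characterization above, every line bundle. Since every object of $D^b(Coh(X_n))$ is built from line bundles and skyscrapers by iterated cones, and $F$ acts as the identity on all the relevant $\Hom$-spaces --- these are computed from $\cO$, line bundles and skyscrapers, on which $F$ is the identity --- one rigidifies the object-wise isomorphisms into a natural isomorphism $F\cong\mathrm{id}$, exactly as in the final step of the Bondal--Orlov reconstruction theorem, whence $F\cong f^{*}$ in general. The two genuinely delicate points are (i) the passage from the bijection $g$ of $U$ to an algebraic automorphism of $X_n$, including its extension over the nodes, and (ii) turning the last family of isomorphisms into a natural transformation; the remainder is bookkeeping.
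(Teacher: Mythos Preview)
Your strategy is sound but takes a different path from the paper. The paper, deferring to \cite{Si} and \cite{BO}, argues much more directly: from $F(\cO)\cong\cO$ and $F(\kappa(x_i))\cong\kappa(x_i)$ one gets $F(\cL^{\otimes k})\cong\cL^{\otimes k}$ for the ample bundle $\cL=\cO(x_1+\cdots+x_n)$ by iterating the cone on the unique nontrivial map $\cO(D)\to\kappa(x_i)$, and then $F$ induces a graded automorphism of the homogeneous coordinate ring $\bigoplus_k H^0(X_n,\cL^{\otimes k})$, whose $\mathrm{Proj}$ is $X_n$. This produces the automorphism $f$ in one stroke and bypasses your delicate step (i) entirely; the extension to all of $D^b(Coh(X_n))$ then follows the Bondal--Orlov spanning-class argument, since $\{\cL^{\otimes k}\}$ is a spanning class even though it does not generate $D^b(Coh(X_n))$ in the singular case. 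Your route --- recognising smooth skyscrapers intrinsically, building the bijection $g$ on $U$, then promoting it to a scheme map --- is closer in spirit to the original Bondal--Orlov reconstruction for Fano varieties; it works, but the graded-ring shortcut is what makes the singular, trivially-canonical situation tractable.

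Two small corrections that do not affect your overall logic. First, your description of $K_{\mathrm{num}}(\Perf(X_n))$ is off for $n\geq 2$: the Euler form does not see component-wise degree (one checks $\chi(-,\kappa(x_i)-\kappa(x_j))\equiv 0$), so $K_{\mathrm{num}}\cong\bZ^2$ via rank and \emph{total} degree. Fortunately your characterisation of smooth skyscrapers survives with this coarser invariant, since a rank-zero perfect sheaf is torsion on the smooth locus and simplicity forces length one. Second, line bundles and smooth-point skyscrapers generate only $\Perf(X_n)$, not $D^b(Coh(X_n))$ --- the skyscraper at a node, for instance, is not in the thick subcategory they span --- so your last paragraph should invoke the spanning-class argument from \cite{BO} rather than generation by cones.
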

\begin{proof}
See Lemma 3.3 of \cite{Si}. The key fact is that if $F$ preserves $\cO$ and $\gk(x_i)$, then it will induce an automorphism of the homogeneous coordinate ring associated to the ample line bundle $\cL = \cO(x_1 + x_2 + \ldots + x_n)$. This implies the existence of an isomorphism $f:X_n \rightarrow X_n$ such that $F = f^*$, when restricted to the linear sub-category having as objects the tensor powers of $\cL$. The claim then follows as in the proof of Theorem 3.1 of \cite{BO}. Note that, under the assumptions of the theorem, if $n > 2$, $f$ has to be the identity. For $n=2$, $f$ might be non-trivial, but has to be involutive, i.e. $f^2 = id$. 
\end{proof}

\begin{lemma}
\label{comp}
Let $x \in X_n$ be a smooth point, then
\begin{itemize}
\item $T_{\gk(x)} \cong -\otimes \cO(x)$,
\item $\To(\gk(x)) \cong \cO(-x)[1]$,
\item $\To(\cO(x)) \cong \gk(x)$,
\item $\To(\cO) \cong \cO$.
\end{itemize}
\end{lemma}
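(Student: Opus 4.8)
The plan is to prove the four assertions of Lemma \ref{comp} one at a time, since they are essentially unrelated computations. For the first, $T_{\gk(x)} \cong -\otimes \cO(x)$, I would recall that for a spherical object $E$ the twist functor $T_E$ sits in a triangle $\Hom^\bullet(E, -) \otimes E \to \id \to T_E$. The skyscraper $\gk(x)$ at a smooth point $x$ of $X_n$ is spherical (its $\Ext$-algebra is $\Lambda^\bullet$ of a $1$-dimensional space, as $X_n$ is Gorenstein of dimension $1$ with trivial dualizing sheaf near $x$). Then I would evaluate $T_{\gk(x)}$ on a spanning set — say line bundles, or $\cO$ and $\gk(y)$ for $y\neq x$ and $y=x$ — and compare with $-\otimes\cO(x)$: for $\cF$ locally free, $\Hom^\bullet(\gk(x),\cF)=0$ so $T_{\gk(x)}\cF\cong\cF$, which matches $-\otimes\cO(x)$ only up to the twisting at $x$, so more carefully I would use the standard short exact sequence $0\to\cO\to\cO(x)\to\gk(x)\to 0$ and the associated triangle to identify $T_{\gk(x)}\cO$ with $\cO(x)$, then invoke that an exact autoequivalence fixing $\cO$ and matching on a generating line bundle, with the right action on skyscrapers, must be $-\otimes\cO(x)$; alternatively just cite the computation in \cite{SeT} / \cite{Si}.

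For the remaining three, the functor $\To = T_{\cO}$ is the twist along the structure sheaf, which is also spherical for the same Gorenstein/trivial-canonical reason. Here the defining triangle reads $\Hom^\bullet(\cO,\cF)\otimes\cO \to \cF \to \To(\cF) \to [1]$, i.e. $R\Gamma(\cF)\otimes\cO \to \cF \to \To(\cF)$. I would compute:
\begin{itemize}
\item $\To(\cO)$: since $\Hom^\bullet(\cO,\cO)=R\Gamma(\cO)=\bC$ (as $X_n$ is a cycle of $\bP^1$'s, $h^0=h^1=1$, wait — actually $\cO$ is spherical precisely because $\Ext^\bullet(\cO,\cO)=\bC\oplus\bC[-1]$), the evaluation map $\cO\to\cO$ is an isomorphism in degree $0$, and the cone on $R\Gamma(\cO)\otimes\cO\to\cO$ computes to $\cO$ again because the $H^1$-contribution cancels; so $\To(\cO)\cong\cO$.
\item $\To(\cO(x))$: compute $R\Gamma(\cO(x))$, which for a smooth point on a cycle of projective lines is $1$-dimensional in degree $0$ (degree $1$ on one component, trivial on the others, so $h^0=1$, $h^1=0$); the evaluation $\cO\to\cO(x)$ is the section vanishing at $x$, whose cone is $\gk(x)$; hence $\To(\cO(x))\cong\gk(x)$.
\item $\To(\gk(x))$: compute $R\Gamma(\gk(x))=\bC$ in degree $0$; the evaluation $\cO\to\gk(x)$ is the surjection with kernel $\cO(-x)$, so the cone is $\cO(-x)[1]$; hence $\To(\gk(x))\cong\cO(-x)[1]$.
\end{itemize}

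I would organize these as: first establish that $\cO$ and $\gk(x)$ are spherical objects in $D^b(Coh(X_n))$ (trivial dualizing sheaf of $X_n$ plus the relevant cohomology computations), then write down the defining triangle of $T_E$ in each case and identify the cone using the standard short exact sequences $0\to\cO(-x)\to\cO\to\gk(x)\to 0$ and $0\to\cO\to\cO(x)\to\gk(x)\to 0$ on the smooth locus. The main obstacle is the cohomology bookkeeping on the \emph{singular} curve $X_n$: one must be careful that $H^1(X_n,\cO)=\bC$ (so $\cO$ really is spherical) and that line bundles of the form $\cO(x)$ with $x$ smooth have vanishing $H^1$ — this uses that $\cO(x)$ restricted to the component containing $x$ is $\cO_{\bP^1}(1)$ and is $\cO_{\bP^1}$ on the others, together with the Mayer–Vietoris / normalization sequence of Theorem \ref{normalization}. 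Everything else is formal manipulation of the twist triangle. In the writeup I would likely just cite \cite{SeT} for sphericality and the triangle formalism, and refer to \cite{Si} for the detailed identifications, presenting only the three cone computations explicitly.
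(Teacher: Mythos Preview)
Your proposal is correct and considerably more explicit than the paper's own proof, which simply cites \cite{SeT}, Section~3.d for the first isomorphism and \cite{BK}, Lemma~2.13 for the other three, with no further argument. Your direct computations via the twist triangle $R\Gamma(\cF)\otimes\cO\to\cF\to\To(\cF)$ together with the short exact sequences $0\to\cO(-x)\to\cO\to\gk(x)\to 0$ and $0\to\cO\to\cO(x)\to\gk(x)\to 0$ are the standard route and are presumably what those references contain. Two small points worth tightening in a writeup: the step $\To(\cO)\cong\cO$ is cleaner as the general fact $T_E(E)\cong E$ for any $1$-spherical object $E$ (your ``the $H^1$-contribution cancels'' is correct but vague), and your parenthetical justification for $h^0(\cO(x))=1$, $h^1(\cO(x))=0$ (``degree $1$ on one component, trivial on the others'') is not an argument on its own --- though you rightly flag later that Serre duality on the Gorenstein curve $X_n$ (giving $H^1(\cO(x))^*\cong H^0(\cO(-x))=0$) or the normalization sequence is what actually does the work.
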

\begin{proof}
The first isomorphism is proved in \cite{SeT}, Section 3.d. For the other isomorphisms, see Lemma 2.13 in \cite{BK}.
\end{proof}

\begin{proof}[Proof of Theorem \ref{main}]
The braid relations follow from Proposition 2.13 of \cite{SeT}. It remains to check that $\To, \Txi, i=1,2$ satisfy the $\tilde{G}$-relation. Simply by keeping track of the isomorphisms collected in Lemma \ref{comp}, and applying the braid relations, one can see that 
\begin{itemize}
\item $(T_{\gk(x_1)}T_{\cO}T_{\gk(x_2)})^2(\cO) \cong \cO[1]$, and
\item $(T_{\gk(x_1)}T_{\cO}T_{\gk(x_2)})^2(\gk(x_1)) \cong \gk(x_{2})[1]$, $(T_{\gk(x_1)}T_{\cO}T_{\gk(x_2)})^2(\gk(x_2)) \cong \gk(x_{1})[1]$. 
\end{itemize}
Let's check this for $\gk(x_1)$:
$$
(T_{\gk(x_1)}T_{\cO}T_{\gk(x_2)})(T_{\gk(x_1)}T_{\cO}T_{\gk(x_2)})(\gk(x_1)) \cong (T_{\gk(x_1)}T_{\cO}T_{\gk(x_2)})(\cO[1]) \cong \gk(x_2)[1].
$$
Consider an involution $\sigma: X_2 \rightarrow X_2$ such that $\sigma(x_1) = x_2$, and $\sigma(x_2) = x_1$. It follows from Lemma \ref{map}, and the comments made at the end of its proof, that there is an involution $f: X_2 \rightarrow X_2$, and a natural equivalence $(T_{\gk(x_1)}T_{\cO}T_{\gk(x_2)})^2 \cong f^* \sigma^* [1]$. As $\sigma$ and $f$ commute, by taking the square of this natural equivalence, one gets
$$
(T_{\gk(x_1)}T_{\cO}T_{\gk(x_2)})^4 \cong (f^* \sigma^* [1]) (f^* \sigma^* [1]) \cong (f^* )^2 (\sigma^*)^2 [2] \cong [2].
$$
This concludes the proof of the theorem.
\end{proof}

\bibliographystyle{amsplain}

\end{document}